\newtheorem{thm}{Theorem}[section]
\newtheorem{defi}{Definition}[section]
\newtheorem{lem}[thm]{Lemma}
\newtheorem{prop}[thm]{Proposition}
\newtheorem{rem}{Remark}[section]
\theoremstyle{definition}
\numberwithin{equation}{section}
\author {Wenda Zhang}
\address{College of Mathematics and Statistics, Chongqing Jiaotong University, Chongqing, China \ 400074}
\email{wendazhang951@aliyun.com}
\author{zhiqiang li}
\address{College of Mathematics and Statistics, Chongqing University, Chongqing, China \ 401331}
\email{zqli@cqu.edu.cn}
\author{yunhua zhou}
\address{College of Mathematics and Statistics, Chongqing University, Chongqing, China \ 401331}
\email{zhouyh@cqu.edu.cn}
\keywords{Unstable topological pressure, Sub-additive potential, Variational principle}
\subjclass[2000]{Primary 37D35, Secondary 37D30}
\begin{document}

\title[sub-additive unstable topological pressure]{unstable topological pressure of partially hyperbolic diffeomorphisms with sub-additive potentials}

\begin{abstract}
In this paper, we introduce the unstable topological pressure for $C^1$-smooth partially hyperbolic diffeomorphisms with sub-additive
potentials. Moreover, without any additional assumption, we have established the expected variational principle which connects this unstable topological pressure and the unstable measure theoretic entropy, as well as the corresponding \emph{Lyapunov exponent.}
\end{abstract}

\maketitle

\section{Introduction}
It is well known that the topological pressure for additive potentials was first introduced by Ruelle
for expansive maps, see \cite{Ruelle}. In \cite{Pesin1}, Pesin and Pitskel defined topological
pressures for non-compact sets, which is a generalization of Bowen's topological entropy on
non-compact sets in \cite{Bowen1}, and they proved a variational principle under some supplementary conditions. On the other hand, in \cite{K. Falconer} and \cite{L. Barreira1}, Falconer and Barreira investigated topological pressures for non-additive (including sub-additive ones) potentials and obtained variational principles under some restrictions. In \cite{Huang1}, Cao, Feng, and Huang established the variational principle of topological pressure for sub-additive potentials without any additional assumptions. 
Topological pressures, variational principles, and equilibrium states play fundamental
roles in statistical mechanics, ergodic theory, and dynamical systems, see the books \cite{Bowen,Walters2}.

In the category of differentiable dynamics, dynamical invariants such as entropy and pressure are developed for diffeomorphisms on closed Riemannian manifolds, especially for $C^1$-smooth partially hyperbolic diffeomorphisms. A fundamental result is due to Hu, Hua, and Wu (\cite{Hu1}), they introduced the so called unstable topological and unstable metric entropy, obtained the corresponding Shannon-McMillan-Breiman theorem and variational principle. Along this line, Hu, Wu, and Zhu investigated the unstable topological pressure for additive potentials in \cite{Zhu2}.

In this paper, we study the unstable topological pressure in a more general setting, i.e., for $C^1$-smooth partially hyperbolic diffeomorphisms with sub-additive potentials, and set up the expected variational principle. Although the project originates from purely topological setting, we must deal with measure disintegration in differentiable dynamics with respect to unstable manifolds, which causes some difficulties and complexity. Moreover, we have to handle the \emph{Lyapunov exponent} of sub-additive potentials entirely rather than focus only on the continuous potential in additive case.  

\begin{thm}\label{variational principle}
Let $f : M \rightarrow M$ be a $C^{1}$-smooth partially hyperbolic diffeomorphism and $\mathcal{G} = \{\log g_{n}\}
_{n=1}^{\infty}$ be a sequence of sub-additive potentials of $f$ on $M$. Then$$
P^{u}(f,\mathcal{G})=\sup\{h^{u}_{\mu}(f)+\mathcal{G}_{*}(\mu)\mid\mu\in \mathcal{M}_{f}(M)\}.
$$
Moreover,
$$
P^{u}(f,\mathcal{G})= \sup\{h^{u}_{\mu}(f)+\mathcal{G}_{*}(\mu)\mid\mu\in \mathcal{M}^{e}_{f}(M)\}.
$$
\end{thm}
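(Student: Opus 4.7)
My plan is to establish the two inequalities separately, adapting the strategy of Cao--Feng--Huang \cite{Huang1} from the classical setting to the unstable one developed by Hu--Hua--Wu and Hu--Wu--Zhu \cite{Hu1,Zhu2}. Throughout, $\mathcal{G}_*(\mu) := \lim_{n \to \infty} \frac{1}{n}\int \log g_n \, d\mu$, whose existence for $\mu \in \mathcal{M}_f(M)$ follows from Kingman's sub-additive ergodic theorem, and I will fix a measurable partition $\xi$ subordinate to the unstable foliation $W^u$ in the sense of \cite{Hu1}. The unstable pressure $P^u(f,\mathcal{G})$ will be the one defined via $(n,\epsilon)$-separated sets contained in a single unstable plaque, with the separation weight $\sum_{x \in E} g_n(x)$ replacing the exponential sum of Birkhoff averages.

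For the easy direction $P^u(f,\mathcal{G}) \geq h^u_\mu(f) + \mathcal{G}_*(\mu)$ for every $\mu \in \mathcal{M}^e_f(M)$, I would combine the unstable Shannon--McMillan--Breiman theorem of \cite{Hu1} with Kingman's theorem applied fiberwise on conditional measures $\mu^\xi_x$ coming from disintegration along $W^u$. On a typical leaf, for large $n$ one covers a set of conditional measure close to $1$ by Bowen balls of unstable radius $\epsilon$ whose $\mu^\xi_x$-mass is approximately $e^{-nh^u_\mu(f)}$, and on each such ball $g_n$ is roughly $e^{n\mathcal{G}_*(\mu)}$ by Kingman; extracting an $(n,\epsilon)$-separated subset inside a plaque gives the required lower bound after letting $\epsilon \to 0$.

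The hard direction $P^u(f,\mathcal{G}) \leq \sup\{h^u_\mu(f) + \mathcal{G}_*(\mu)\}$ is where the main obstacle lies. Following Cao--Feng--Huang, one chooses for each $n$ a maximal $(n,\epsilon)$-separated set $E_n$ in a plaque that almost realizes $P^u(f,\mathcal{G})$, forms the atomic probability measures $\sigma_n := \frac{1}{\Lambda_n}\sum_{x \in E_n} g_n(x) \delta_x$ with normalizing $\Lambda_n$, pushes them forward by averaging, $\mu_n := \frac{1}{n}\sum_{k=0}^{n-1} f^k_*\sigma_n$, and passes to a weak-* limit $\mu$. The key analytic step is to control $\frac{1}{n}\sum_{x} \frac{g_n(x)}{\Lambda_n}\log\frac{g_n(x)}{\Lambda_n}$ by an unstable conditional entropy of a well-chosen partition together with $\int \log g_n \, d\mu_n$; this requires three partially hyperbolic-specific ingredients: (i) using a partition with small diameter \emph{only in the unstable direction} so that its atoms are essentially distinguished by $(n,\epsilon)$-unstable separation, (ii) passing the inequality to the conditional entropy $h^u_\mu(f,\alpha)$ via the disintegration $\mu = \int \mu^\xi_x \, d\mu(x)$, for which the geometry of $\xi$ and upper semicontinuity of $\mu \mapsto h^u_\mu(f)$ from \cite{Hu1} are essential, and (iii) justifying the convergence $\frac{1}{n}\int \log g_n \, d\mu_n \to \mathcal{G}_*(\mu)$, which is the genuinely new difficulty: unlike the additive case, $\log g_n$ is not a fixed continuous observable, so I will use the monotone approximation $\mathcal{G}_*(\mu) = \inf_n \frac{1}{n}\int \log g_n \, d\mu$ valid for sub-additive cocycles together with the continuity of each $\log g_m$ to pass to the limit, exactly as in \cite{Huang1} but after performing the disintegration argument.

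Finally, to pass from $\mathcal{M}_f(M)$ to $\mathcal{M}^e_f(M)$, I will invoke the ergodic decomposition $\mu = \int \mu_x \, d\mu(x)$: the map $\mu \mapsto h^u_\mu(f)$ is affine by the results of \cite{Hu1}, and $\mu \mapsto \mathcal{G}_*(\mu)$ is affine because Kingman's limit commutes with ergodic decomposition, so $h^u_\mu(f) + \mathcal{G}_*(\mu) = \int (h^u_{\mu_x}(f) + \mathcal{G}_*(\mu_x)) \, d\mu(x)$, yielding an ergodic $\mu_x$ with value at least that of $\mu$. The anticipated main obstacle is item (iii) together with showing that $\mu$ is genuinely $f$-invariant and that the disintegration $\mu^\xi_x$ of the weak-* limit is compatible with the $(n,\epsilon)$-separated structure of $E_n$; both require careful choice of the plaque and uniform continuity estimates for $g_m$ on Bowen balls.
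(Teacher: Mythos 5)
Your plan for the inequality $P^u(f,\mathcal{G})\leq\sup\{h^u_\mu(f)+\mathcal{G}_*(\mu)\}$ (your ``hard direction'') and for the passage to ergodic measures matches the paper's Step~2 and Step~3 essentially verbatim: the weighted atomic measures $\nu_n=\Lambda_n^{-1}\sum_{y\in E_n}g_n(y)\delta_y$, the time-averaged measures $\mu_n$, the weak-* limit, the $q$-block decomposition of the conditional entropy, and the affine structure of $h^u_\mu(f)$ and $\mathcal{G}_*(\mu)$ over the ergodic decomposition. You also correctly isolate the genuinely sub-additive difficulty: $\nu_n$ is not invariant, so $\tfrac{1}{n}\int\log g_n\,d\nu_n$ is not directly a time average. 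The fix you name --- use sub-additivity to bound $\log g_n(x)$ up to an error $O(1)$ by a Birkhoff sum of the fixed continuous function $\tfrac{1}{l}\log g_l$, which turns $\nu_n$-integrals into $\mu_n$-integrals, pass to the weak-* limit $\mu$, then let $l\to\infty$ using $\mathcal{G}_*(\mu)=\inf_l\tfrac{1}{l}\int\log g_l\,d\mu$ --- is precisely the mechanism the paper relies on (it appears explicitly in the proof of Proposition~\ref{limits}).

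Where you diverge from the paper is the ``easy direction'' $h^u_\mu(f)+\mathcal{G}_*(\mu)\leq P^u(f,\mathcal{G})$. You propose a Katok-type argument: unstable Shannon--McMillan--Breiman for the conditional measures $\mu^\xi_x$ plus Kingman's pointwise theorem to estimate both the mass of Bowen balls and the size of $g_n$ at typical points, then extract a large $(n,\epsilon)$ $u$-separated set. The paper instead proceeds combinatorially: it applies the conditional Jensen-type inequality (Lemma~\ref{lemma 2.9}) to bound $H_\mu(\alpha_0^{n-1}\mid\eta)+\int\log g_n\,d\mu$ by $\sup_x\log\sum_{A}\sup_A g_n$, then greedily selects a separated set so that each separated point dominates a cluster of at most $2^n$ cells of $\alpha_0^{n-1}$, and finally kills the $\log 2$ error by applying the result to $(f^k,\mathcal{G}^{(k)})$ and using the power rule $P^u(f^k,\mathcal{G}^{(k)})=kP^u(f,\mathcal{G})$, $h^u_\mu(f^k)=kh^u_\mu(f)$. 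The paper's route has the practical advantage of working for \emph{all} $\mu\in\mathcal{M}_f(M)$ at once and of being purely combinatorial, while your SMB route naturally applies to ergodic $\mu$ only and requires an ergodic-decomposition step at the end (which you need anyway). If you pursue the SMB route, there is a technical point you must attend to that the combinatorial argument sidesteps: SMB controls the conditional measure of the \emph{cell} $\alpha_0^{n-1}(x)\cap\eta(x)$, not of an intrinsic Bowen ball $B^u_n(x,\epsilon)$, and these are nested only one way when $\operatorname{diam}\alpha<\epsilon$. To get an upper bound $\mu^\eta_x(B^u_n(y,\epsilon))\lesssim e^{-n(h^u_\mu-\gamma)}$ you need a counting argument (each Bowen ball meets a bounded-per-step number of cells of $\alpha$), which is essentially a hidden version of the paper's $2^n$-cluster bound. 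So the ``easy'' direction is not quite as automatic as your sketch suggests, though the plan is viable once this bridge is made explicit. Finally, ``showing $\mu$ is $f$-invariant'' is not an obstacle at all --- it is immediate from the time-averaging --- whereas the real delicacy in Step~2 is choosing $\alpha\in\mathcal{P}$ with $\mu(\partial\alpha)=0$ and $\eta\in\mathcal{P}^u$ with $W^u(x,\delta)\subset\eta(x)$ so that atoms of $\nu_n$ in the plaque are separated by $\alpha_0^{n-1}\vee\eta$ and the conditional entropy is upper semi-continuous at $\mu$; the paper makes these choices explicitly and you should too.
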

(All involved terms and notation are defined in Section 2.)

The paper is organized as follows. In Section 2, we set up notation and give definitions of unstable topological pressure for sub-additive potentials. Section 3 consists of the proof of Theorem \ref{variational principle}.

\section{Notation and definitions}
Throughout the paper, we focus on the dynamical system $(M, f)$, where $M$ is a finite dimensional, smooth, connected, and compact Riemannian manifold
without boundary; and $f : M \rightarrow M$ is a $C^{1}$-smooth partially hyperbolic diffeomorphism. 
We say $f$ is
\emph{partially hyperbolic}, if there exists a nontrivial $Df$-invariant
splitting $TM=E^{s}\bigoplus E^{c}\bigoplus E^{u}$ of the tangent bundle into stable, central, and unstable
distributions, such that all unit vectors $v^{\sigma}\in E^{\sigma}_{x} (\sigma=s, c, u)$ with $x \in M$ satisfy
$$\parallel D_{x}fv^{s} \parallel< \parallel D_{x}fv^{c} \parallel<\parallel D_{x}fv^{u} \parallel,$$
and
$$\parallel D_{x}f\mid_{E^{s}_{x}}\parallel< 1 \;\;\mbox{and} \;\;\parallel D_{x}f^{-1}\mid_{E^{u}_{x}}\parallel< 1,$$
for some suitable Riemannian metric on $M$. The stable distribution $E^{s}$ and unstable
distribution $E^{u}$ are integrable to the stable and unstable foliations $W^{s}$ and $W^{u}$
respectively such that $TW^{s} = E^{s}$ and $TW^{u} = E^{u}$.

We denote by $\mathcal{M}(M)$ the set of Borel probability measures on $M$, by $\mathcal{M}_f(M)$ the subset of invariant ones of $f$ on $M$, and by $\mathcal{M}^e_f(M)$ the subset of ergodic ones.

Let us gather some necessary preliminaries for the unstable metric entropy in \cite{Hu1}.

Take $\epsilon_{0} > 0$
small. Let $\mathcal{P} = \mathcal{P}_{\epsilon_{0}}$ denote the set of finite Borel partitions $\alpha$ of $M$ whose elements
have diameters smaller than or equal to $\epsilon_{0}$, that is, $\mbox{diam}\;\alpha := \sup\{\mbox{diam}\;A : A \in
\alpha\} \leq \epsilon_{0}$. For any partition $\xi$ of $M$ and any $x\in M$, we denote by $\xi(x)$ the element of $\xi$ containing $x$.
For each $\beta \in \mathcal{P}$ we can define a finer partition $\eta$  such that $\eta(x) =\beta(x) \cap W^{u}_{loc}(x)$ for each $x \in M$, where $W^{u}_{loc}(x)$ denotes the local unstable manifold
at $x$ whose size is greater than the diameter $\epsilon_{0}$ of $\beta$. Since $W^{u}$ is a continuous
foliation, $\eta$ is a measurable partition with respect to any Borel probability measure
on $M$.

Let $\mathcal{P}^{u}$ denote the set of partitions $\eta$ obtained in this way and \emph{subordinate
to unstable manifolds}. Here a partition $\eta$ of $M$ is said to be subordinate to unstable
manifolds of $f$ with respect to a measure $\mu$ if for $\mu$-almost every $x, \eta(x) \subseteq W^{u}(x)$
and contains an open neighborhood of $x$ in $W^{u}(x)$. It is clear that if $\alpha \in \mathcal{P}$
satisfies $\mu(\partial\alpha) = 0$, where $\partial\alpha := \cup_{A\in \alpha}\partial A$, then the corresponding $\eta$ given by
$\eta(x) = \alpha(x) \cap W^{u}_{loc}(x)$ is a partition subordinate to unstable manifolds of $f$.

Given any probability measure $\nu$ and any measurable partition $\eta$ of $M$, a classical result of Rokhlin (cf.\,\cite{Rohlin}) says that there exists a system of conditional measures with respect to $\eta$. which is a family of probability measures $\{\nu^{\eta}_{x} : x \in M\}$ with $\nu^{\eta}_{x}(\eta(x))= 1$
such that for every measurable set $B \subseteq M, x \mapsto \nu^{\eta}_{x}(B)$ is measurable and
$$\nu(B) =\int_{X}\nu^{\eta}_{x}(B)d\nu(x).$$ This is called the \emph{canonical system of conditional measures}
of $\nu$ over $\eta$ or the measure disintegration of $\nu$ over $\eta$. It is essentially
unique in the sense that two such systems coincide with respect to a set of points with full $\nu$-measure.

In  \cite{Hu1}, the authors defined the unstable metric entropy as follows.
\begin{defi}
For any measurable partitions $\alpha\in \mathcal{P}$ and $\eta\in\mathcal{P}^{u}$, set
$$H_{\mu}(\alpha|\eta):=-\int_{M}\log\mu^{\eta}_{x}(\alpha(x))d\mu(x),$$
 the unstable metric entropy of $f$  is defined as
$$h^{u}_{\mu}(f)=\sup\limits_{\eta\in\mathcal{P}^{u}}h_{\mu}(f|\eta),$$
where
$$h_{\mu}(f, \alpha|\eta)=\limsup_{n\rightarrow\infty}\frac{1}{n}H_{\mu}(\alpha_{0}^{n-1}|\eta)\;\text{and}\;h_{\mu}(f|\eta)=\sup\limits_{\alpha\in\mathcal{P}}h_{\mu}(f, \alpha|\eta)$$
\end{defi}

\begin{rem}
In \cite{Hu1}, the authors proved that $h_{\mu}(f|\eta)$ is independent of $\eta\in\mathcal{P}^u$. Moreover, for any ergodic measure $\mu$, any $\alpha\in\mathcal{P}_{\epsilon}$($\epsilon$ small enough), $\eta\in \mathcal{P}^u$,  one has $h^u_{\mu}(f)=h_{\mu}(f|\eta)=h_{\mu}(f, \alpha|\eta)$. Moreover, the limit sup above actually means the limit (See Lemma 2.8, Theorem A, and Corollary A.2 there). From now on we shall use this fact frequently without mentioning the reference any more.
\end{rem}

\begin{defi}
Given a sequence of continuous functions $\mathcal{G} = \{\log g_{n}\}
_{n=1}^{\infty}$ on $M$, $\mathcal{G}$ is called a sequence of sub-additive potentials of $f$ if
\begin{equation}\label{sub-additive}
\log g_{m+n}(x)\leq \log g_{n}(x)+\log g_{m}(f^{n}x), \forall x\in M, m,n\in\mathbb{N}.
\end{equation}
\end{defi}

\begin{rem}
For any $\mu\in\mathcal{M}_f(M)$, set
$$\mathcal{G}_{*}(\mu)=\lim\limits_{n\rightarrow \infty}\frac{1}{n}\int\log g_{n}d\mu,$$
and $\mathcal{G}_{*}(\mu)$ is called the \emph{Lyapunov exponent} of $\mathcal{G}$ with respect to $\mu$. It takes values in
$[-\infty, +\infty)$. Moreover, the Sub-additive Ergodic Theorem (cf. \cite{Walters2}, Theorem 10.1) implies that for an ergodic measure $\mu$, one has
$$\mathcal{G}_{*}(\mu)=\lim\limits_{n\rightarrow \infty}\frac{1}{n}\log g_{n} \;a.e.\;x\in M.$$
\end{rem}

Denote by $d^{u}$ the metric induced by the Riemannian
structure on the unstable manifold and let $d^{u}
_{n}(x, y) = \max\limits_{0\leq j\leq n-1} d^{u}(f^{j}(x), f^{j}(y))$.
Denote by $W^{u}(x, \delta)$ the open ball inside $W^{u}(x)$ with center $x$ and radius $\delta$ with
respect to $d^{u}$. Let $E$ be a subset of points in $\overline{W^{u}(x, \delta)}$ with pairwise $d^{u}_{n}$-distances at least $\epsilon$, such a set is called an $(n, \epsilon) \,u$-separated subset of $\overline{W^{u}(x, \delta)}$. Note that $M$ is compact and $W^{u}$ is a continuous foliation, then for any $\delta>0$ small enough, there exists a $C > 1$ such that for any $x\in M$, one has
\begin{equation}\label{distance}
 d(y, z)\leq d^{u}(y, z)\leq C d(y, z), \text{for any} \;y, z\in\overline{ W^{u}(x,\delta)}.
\end{equation}

\begin{defi}\label{sep-pressure}
Let $\mathcal{G} = \{\log g_{n}\}
_{n=1}^{\infty}$  be a sequence of  sub-additive potentials of $f$.
Set
$$\begin{aligned}
&P_{n}^{u}(f, \mathcal{G}, \epsilon, x, \delta)\\
:=&\sup\left\{\sum_{y\in E}g_{n}(y)\mid E\,\text{ is an}\, (n, \epsilon)\, u\text{-separated subset of}\; \overline{W^{u}(x, \delta)}\right\}
\end{aligned},$$
and
$$P^{u} (f, \mathcal{G}, \epsilon, x,\delta):=\limsup_{n\rightarrow\infty}\frac{1}{n}\log P_{n}^{u}(f, \mathcal{G}, \epsilon, x, \delta).$$
Define $$P^{u}(f, \mathcal{G}, x, \delta):=\lim\limits_{\epsilon\rightarrow 0}P^{u} (f, \mathcal{G}, \epsilon, x,\delta).$$
The unstable topological pressure of $f$ with respect to $\mathcal{G}$ is defined as
$$P^{u}(f, \mathcal{G}):=\lim\limits_{\delta\rightarrow 0}\sup\limits_{x\in M}P^{u}(f, \mathcal{G}, x, \delta).$$
\end{defi}

We can also define unstable
topological pressure using $(n,\epsilon)\; u$-spanning sets as follows.

A subset $F \subset W^{u}(x)$ is called an $(n,\epsilon)\; u$-spanning set of $\overline{W^{u}(x, \delta)}$ if $\overline{W^{u}(x, \delta)}\subset \cup_{y\in F}B_{n}^{u}(y,\epsilon)$, where $B^{u}
_{n}(y,\epsilon) = \{z \in W^{u}(x) : d^{u}
_{n}(y, z) \leq\epsilon\}$ is the $(n,\epsilon)\; u$-Bowen
ball around $y$.

\begin{defi}\label{span-pressure}
Let $\mathcal{G} = \{\log g_{n}\}
_{n=1}^{\infty}$  be a sequence of  sub-additive potentials of $f$.
Set
$$\begin{aligned}
&Q_{n}^{u}(f, \mathcal{G}, \epsilon, x, \delta)\\
:=&\inf\left\{\sum_{y\in F}\sup_{z\in B_{n}^{u}(y,\epsilon)}g_{n}(z)\mid F\;\text{is an} \;(n, \epsilon)\, u\text {-spanning subset of } \overline{W^{u}(x, \delta)}\right\},
\end{aligned}$$

$$Q^{u}(f, \mathcal{G}, \epsilon, x, \delta):=\limsup\limits_{n\rightarrow\infty}\frac{1}{n}\log Q_{n}^{u}(f, \mathcal{G}, \epsilon, x, \delta)$$
and
$$Q^{u}(f, \mathcal{G},  x, \delta ):=\lim\limits_{\epsilon\rightarrow 0}Q^{u}(f, \mathcal{G}, \epsilon, x, \delta).$$
Define
$$P^{u\ast}(f, \mathcal{G}):=\lim\limits_{\delta\rightarrow 0}\sup\limits_{x\in M}Q^{u}(f, \mathcal{G}, x, \delta).$$
\end{defi}

The two definitions above actually coincide.
\begin{prop}
Suppose $\mathcal{G}=\{\log g_{n}\}^{\infty}_{n=1}$ is a sequence of sub-additive potentials of $f$. Then
for any $x\in M$ and $\delta>0$, one has
$Q^{u}(f, \mathcal{G}, x, \delta)=P^{u}(f, \mathcal{G}, x, \delta),$ and hence
$P^{u}(f, \mathcal{G})=P^{u\ast}(f, \mathcal{G}).$
\end{prop}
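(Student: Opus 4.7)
The plan is to establish the pointwise identity $Q^u(f,\mathcal{G},x,\delta)=P^u(f,\mathcal{G},x,\delta)$ for each $x\in M$ and $\delta>0$; the global equality $P^u(f,\mathcal{G})=P^{u\ast}(f,\mathcal{G})$ then follows by taking $\sup_{x\in M}$ and $\lim_{\delta\to 0^+}$. I aim to sandwich the two quantities by inequalities at slightly shifted $\epsilon$-scales and pass to the limit $\epsilon\to 0$.

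For the direction $P^u\le Q^u$, I would use the classical injection argument. Fix any $(n,\epsilon)$ $u$-separated set $E\subseteq \overline{W^u(x,\delta)}$ and any $(n,\epsilon/2)$ $u$-spanning set $F$. For each $e\in E$ choose $\phi(e)\in F$ with $d^u_n(e,\phi(e))\le \epsilon/2$; the triangle inequality combined with the $(n,\epsilon)$-separation of $E$ forces $\phi$ to be injective. Since $e\in B^u_n(\phi(e),\epsilon/2)$, one has $g_n(e)\le \sup_{z\in B^u_n(\phi(e),\epsilon/2)} g_n(z)$, so summing yields $\sum_{e\in E}g_n(e) \le \sum_{y\in F}\sup_{z\in B^u_n(y,\epsilon/2)} g_n(z)$. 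Taking $\sup$ over $E$ and $\inf$ over $F$ gives $P^u_n(f,\mathcal{G},\epsilon,x,\delta)\le Q^u_n(f,\mathcal{G},\epsilon/2,x,\delta)$, and then $\tfrac{1}{n}\log\limsup_n$ followed by $\epsilon\to 0$ produces $P^u\le Q^u$.

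The reverse bound $Q^u\le P^u$ is where I expect the main obstacle. A maximal $(n,\epsilon)$ $u$-separated set $E$ in $\overline{W^u(x,\delta)}$ is automatically $(n,\epsilon)$ $u$-spanning, so plugging it into Definition \ref{span-pressure} immediately gives $Q^u_n(f,\mathcal{G},\epsilon,x,\delta)\le \sum_{e\in E}\sup_{z\in B^u_n(e,\epsilon)} g_n(z)$. In the additive setting of \cite{Zhu2} one closes the argument via the uniform-continuity estimate $\sup_{B^u_n(y,\epsilon)} g_n \le g_n(y)\, e^{n\gamma(\phi,\epsilon)}$ with $\gamma(\phi,\epsilon)\to 0$ as $\epsilon\to 0$, which is not a priori available for a generic sub-additive $\mathcal{G}$. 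My plan is to pick, for each $e\in E$, a point $z_e\in\overline{B^u_n(e,\epsilon)}$ attaining the supremum (continuity of $g_n$ and compactness of the closed Bowen ball), extract a genuinely $(n,\epsilon)$ $u$-separated subfamily from $\{z_e\}_{e\in E}$, and use the comparison \eqref{distance} between $d$ and $d^u$ together with an iteration of the sub-additive inequality at a fixed block length $k$ to absorb the resulting multiplicity into a factor that is subexponential in $n$ and tends to $1$ as $\epsilon\to 0$. This should yield $Q^u_n(f,\mathcal{G},\epsilon,x,\delta)\le P^u_n(f,\mathcal{G},\epsilon,x,\delta)\cdot e^{n\omega(\epsilon)}$ with $\omega(\epsilon)\to 0$; taking $\tfrac{1}{n}\log\limsup_n$ and $\epsilon\to 0$ closes the argument.
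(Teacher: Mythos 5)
Your first direction, $P^u\leq Q^u$, is exactly the paper's injection argument (map each point of an $(n,\epsilon)$ $u$-separated set to a nearby point of an $(n,\epsilon/2)$ $u$-spanning set), so there is nothing to add there.

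For the reverse direction your route genuinely differs from the paper's, and I think deliberately so. The paper takes the greedy maximal $(n,\epsilon)$ $u$-separated set $E=\{y_1,\ldots,y_l\}$ (each $y_j$ maximizing $g_n$ over the part of $\overline{W^u(x,\delta)}$ not yet covered by $\bigcup_{i<j}B^u_n(y_i,\epsilon)$), notes that $E$ is $(n,\epsilon)$ $u$-spanning, and then writes $Q_n^u(f,\mathcal{G},\epsilon,x,\delta)\leq\sum_{y\in E}g_n(y)\leq P_n^u(f,\mathcal{G},\epsilon,x,\delta)$. The first step implicitly asserts $\sup_{z\in B^u_n(y_j,\epsilon)}g_n(z)=g_n(y_j)$ for every $j$. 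This holds for $j=1$, but for $j\geq 2$ the ball $B^u_n(y_j,\epsilon)$ may overlap earlier balls $B^u_n(y_i,\epsilon)$ ($i<j$) on which $g_n$ can exceed $g_n(y_j)$; the greedy construction only controls $g_n$ on the \emph{disjointified} pieces $A_j=B^u_n(y_j,\epsilon)\setminus\bigcup_{i<j}B^u_n(y_i,\epsilon)$ --- which is exactly what is used (correctly) in the open-cover comparison of the next proposition, but not over the full Bowen balls that appear in the definition of $Q_n^u$. Your multiplicity-counting plan does not rely on that identity, which is a point in its favor.

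Two refinements will make your plan actually close. (i) The multiplicity you must control --- the number of $e\in E$ whose optimizing points $z_e\in\overline{B^u_n(e,\epsilon)}$ cluster inside a single $d^u_n$-ball --- is bounded by a constant $N(\epsilon)$ \emph{independent of $n$}, and the reason is uniform expansion on $W^u$: if $e_1,\ldots,e_N$ are $(n,\epsilon)$ $u$-separated and all lie in a $d^u_n$-ball of radius $3\epsilon$, then $f^{n-1}e_1,\ldots,f^{n-1}e_N$ are $\epsilon$-separated for $d^u$ and lie in a single $d^u$-ball of radius $3\epsilon$, so $N$ is bounded by a fixed packing constant of $W^u$. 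This, rather than any property of $\mathcal{G}$, is the crux. (ii) Given (i), the ``iterate the sub-additive inequality at a fixed block length $k$'' step --- the vaguest part of your sketch --- is unnecessary and should be dropped: you get directly $Q_n^u(f,\mathcal{G},\epsilon,x,\delta)\leq N(\epsilon)\,P_n^u(f,\mathcal{G},\epsilon,x,\delta')$ for some $\delta'$ slightly larger than $\delta$ (since the $z_e$ may leave $\overline{W^u(x,\delta)}$), and $\tfrac1n\log$ annihilates the constant $N(\epsilon)$ outright; the residual $\delta$-discrepancy is then absorbed via the $\delta$-independence noted in the remark following this proposition.
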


\begin{proof} First we show for any $x\in M$ and $\delta>0$,
\begin{equation}\label{sep-span 1}
Q^{u}(f, \mathcal{G}, \ x, \delta)\geq P^{u}(f, \mathcal{G}, \ x, \delta).
\end{equation}
For any $\epsilon>0$, suppose $F$ is an arbitrary $(n,\epsilon/2)\,u$-spanning subset  and  $E$ is an arbitrary $(n,\epsilon)\,u$-separated
subset of $\overline{W^u(x, \delta)}$, respectively. For each $y\in E$, one can choose some $z\in F$ with $d^{u}_{n}(y, z)\leq\dfrac{\epsilon}{2}$, and then define a map $\varphi: \;E\rightarrow\,F$ by $\varphi(y)=z$. Then $\varphi$ is injective.  Therefore,
$$ \sum_{\varphi(y)\in F}\sup_{z\in B_{n}^{u}(\varphi(y),\epsilon/2)}g_{n}(z)\geq\sum_{y\in E}g_{n}(y),$$
and then
$$Q_{n}^{u}(f, \mathcal{G}, \epsilon/2, x, \delta)\geq P_{n}^{u}(f, \mathcal{G}, \epsilon, x, \delta).$$
This immediately yields (\ref{sep-span 1}).

On the other hand, for any given $n\in \mathbb{N}$ and $\epsilon>0$, one can choose
$y_{1}\in \overline{W^{u}(x, \delta)}$ with $$g_{n}(y_1)=\sup\limits_{y\in \overline{W^{u}(x, \delta)}}g_{n}(y),$$ and then choose
$ y_{2}\in\overline{W^{u}(x, \delta)}\setminus B^{u}_{n}(y_{1},\epsilon)$ with
$$g_{n}(y_2)=\sup\limits_{y\in \overline{W^{u}(x, \delta)}\setminus B^{u}_{n}(y_{1},\epsilon)}g_{n}(y).$$ One can continue this process. More precisely, in step $m$ we choose $ y_{m}\in\overline{W^{u}(x, \delta)}\setminus \bigcup^{m-1}_{j=1}B^{u}_{n}(y_{j},\epsilon)$ with $$g_{n}(y_m)=\sup\limits_{y\in \overline{W^{u}(x, \delta)}\setminus \bigcup^{m-1}_{j=1}B^{u}_{n}(y_{j},\epsilon)}g_{n}(y).$$ Suppose this process stops at certain step $l$, and produces a maximal $(n,\epsilon)\,u$-separated set $E=\{y_{1},\cdots,y_{l}\}$, which implies that $E$ is also an $(n,\epsilon)\,u$-spanning set of $\overline{W^u(x, \delta)}$. Therefore,
$$\begin{aligned}
&Q_{n}^{u}(f, \mathcal{G}, \epsilon, x, \delta)\\
\leq& \sum_{y\in E}\exp\left(\sup\limits_{z\in B^{u}_{n}(y,\epsilon)}\log g_{n}(y)\right)=\sum_{y\in E}g_{n}(y)\\
\leq& \sup\left\{\sum_{y\in F}g_{n}(y)\mid F\;\text{ is an}\, (n, \epsilon)\, u\text{-separated subset of } \overline{W^{u}(x, \delta)}\right\}\\
=&P_{n}^{u}(f, \mathcal{G}, \epsilon, x, \delta).
\end{aligned}$$
and so
\begin{equation}\label{span-sep}
Q^{u}(f, \mathcal{G}, \ x, \delta)\leq P^{u}(f, \mathcal{G}, \ x, \delta).
\end{equation}

Combining (\ref{sep-span 1}) with (\ref{span-sep}), one has
$Q^{u}(f, \mathcal{G}, x, \delta)=P^{u}(f, \mathcal{G}, x, \delta),$ and hence
$P^{u}(f, \mathcal{G})=P^{u\ast}(f, \mathcal{G}).$
\end{proof}

\begin{rem}
With a quite similar argument as the proof of Lemma 2.1 in \cite{Zhu2}, we can show that the unstable topological pressure for sub-additive potentials we defined above is independent of the choice of $\delta$.
\end{rem}

Another formulation of the unstable topological
pressure is using open covers. Let $Cov^{O}_{(x,\delta)}$ be the collection of all finite open covers of $\overline{W^{u}(x, \delta)}$ and $Cov^{B}_{(x,\delta)}$ be the collection of all finite Borel covers of $\overline{W^{u}(x, \delta)}$, i.e., each member of the cover is a Borel subset. Suppose that $\mathcal{V}$ and $\mathcal{U}$ are two covers of $\overline{W^{u}(x, \delta)}$. Denote by $\mathcal{V}\succeq \mathcal{U}$ if the cover $\mathcal{V}$ is a refinement of the cover $\mathcal{U}$. Set $\text{diam}(\mathcal{U}):=\max\{\text{diam}(A)|\, A\in\mathcal{U}\}$.

Given $\mathcal{U} \in Cov^{O}_{(x,\delta)}$, set $\mathcal{U}^{n}_{m}:=
\bigvee^{n}_{i=m} f^{-i}\mathcal{U},$ and define
$$p_{n}^{u} (f, \mathcal{G},\mathcal{U},  x, \delta):=\inf\left\{\sum_{B\in \mathcal{V}}\sup\limits_{y\in B\cap \overline{W^{u}(x, \delta)}}g_{n}(y)\mid\mathcal{V}\in Cov^{B}_{(x,\delta)}, \mathcal{V}\succeq\mathcal{U}^{n-1}_{0}\right\},$$
where the infimum is taken over all Borel covers refining $\mathcal{U}$.

\begin{rem}\label{ocover-pressure}
For any $x\in M$, any $\delta>0$, and any  $\mathcal{U} \in Cov^{O}_{(x,\delta)}$,
 the sequence $\{\log p_{n}^{u} (f, \mathcal{G}, \mathcal{U}, x, \delta)\}_{n\geq 1}$ is sub-additive.
\end{rem}
 In fact, for any positive integers $n$ and $m$, observe that if $\alpha\in Cov^{B}_{(x,\delta)}$ such that $\alpha\succeq \mathcal{U}^{n-1}_{0}$, and $\beta\in Cov^{B}_{(x,\delta)}$ with $ \beta\succeq\mathcal{U}^{m-1}_{0}$, then $\alpha\vee f^{-n}\beta\succeq \mathcal{U}^{n+m-1}_{0}$ and
$$\sum_{A\in \alpha\vee f^{-n}\beta}\sup\limits_{y\in A\cap \overline{W^{u}(x, \delta)}}g_{n+m}(y)\leq \left(\sum_{B\in \alpha}\sup\limits_{y\in B\cap \overline{W^{u}(x, \delta)}}g_{n}(y)\right)\cdot\left(\sum_{C\in \beta}\sup\limits_{z\in C\cap \overline{W^{u}(x, \delta)}}g_{m}(z)\right).$$
Therefore,
$$\log p_{n+m}^{u} (f, \mathcal{G}, \mathcal{U}, x, \delta)\leq \log p_{n}^{u} (f, \mathcal{G}, \mathcal{U}, x, \delta)+\log p_{m}^{u} (f, \mathcal{G}, \mathcal{U}, x, \delta).$$

\begin{defi}\label{ocover-pressure}
Set $$\widetilde{P}^{u}(f, \mathcal{G}, \mathcal{U}, x, \delta):=\lim\limits_{n\rightarrow\infty}\frac{1}{n}\log p_{n}^{u} (f, \mathcal{G}, \mathcal{U}, x, \delta),$$
and
 $$\widetilde{P}^{u}(f, \mathcal{G},x, \delta):=\sup\limits_{\mathcal{U} \in Cov^{O}_{(x,\delta)}}\widetilde{P}^{u}(f, \mathcal{G}, \mathcal{U}, x, \delta).$$
We define $$\widetilde{P}^{u}(f, \mathcal{G}):=\lim\limits_{\delta\rightarrow0}\sup\limits_{x\in M}\widetilde{P}^{u}(f, \mathcal{G},x, \delta).$$
\end{defi}

\begin{rem}
We claim that
\begin{equation}\label{cover-limit}
\widetilde{P}^{u}(f, \mathcal{G}, x, \delta)=\lim\limits_{diam(\mathcal{U})\rightarrow 0}\widetilde{P}^{u}(f, \mathcal{G}, \mathcal{U}, x, \delta).
\end{equation}
Indeed if $\mathcal{V}, \mathcal{U}\in Cov^{O}_{(x,\delta)}$ and $diam(\mathcal{V})$ is less than the Lebesgue number of $\mathcal{U}$, then by the definition one gets
$$p_{n}^{u} (f, \mathcal{G},\mathcal{V},  x, \delta)\geq p_{n}^{u} (f, \mathcal{G},\mathcal{U},  x, \delta).$$
\end{rem}

Next we show that this formulation in terms of open covers is equivalent to the previous definitions.

\begin{prop}Suppose $\mathcal{G}=\{\log g_{n}\}^{\infty}_{n=1}$ is a sequence of sub-additive potentials of $f$. Then
for any $x\in M$ and $\delta>0$, one has
$$\widetilde{P}^{u}(f, \mathcal{G}, x, \delta)=P^{u}(f, \mathcal{G}, x, \delta)=Q^{u}(f, \mathcal{G}, x, \delta),$$ and hence
$$\widetilde{P}^{u}(f, \mathcal{G})=P^{u}(f, \mathcal{G})=P^{u\ast}(f, \mathcal{G}).$$
\end{prop}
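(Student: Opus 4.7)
Since the previous proposition already gives $P^{u}(f,\mathcal{G},x,\delta)=Q^{u}(f,\mathcal{G},x,\delta)$, my plan is to squeeze $\widetilde{P}^{u}(f,\mathcal{G},x,\delta)$ between these two quantities by two Lebesgue-number arguments. Throughout I use that $\overline{W^{u}(x,\delta)}$ is compact (so every finite open cover has a positive Lebesgue number) and the metric equivalence $d\le d^{u}\le Cd$ from (\ref{distance}) to translate between the $d$-geometry of covers and the $d^{u}$-geometry of Bowen balls.

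For $\widetilde{P}^{u}\le Q^{u}$, I would fix $\mathcal{U}\in Cov^{O}_{(x,\delta)}$ with Lebesgue number $\lambda>0$ and take $0<\epsilon<\lambda/2$. For any $(n,\epsilon)\,u$-spanning set $F$ of $\overline{W^{u}(x,\delta)}$, each Bowen ball satisfies $f^{j}B^{u}_{n}(y,\epsilon)\subset\overline{W^{u}(f^{j}y,\epsilon)}$, which has $d$-diameter at most $2\epsilon<\lambda$ and hence sits inside some $U_{j}\in\mathcal{U}$; therefore $B^{u}_{n}(y,\epsilon)\subset\bigcap_{j=0}^{n-1}f^{-j}U_{j}\in\mathcal{U}^{n-1}_{0}$. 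The family $\{B^{u}_{n}(y,\epsilon)\}_{y\in F}$ (intersected with $\overline{W^{u}(x,\delta)}$) is then a Borel refinement of $\mathcal{U}^{n-1}_{0}$, yielding
$$p_{n}^{u}(f,\mathcal{G},\mathcal{U},x,\delta)\le\sum_{y\in F}\sup_{z\in B^{u}_{n}(y,\epsilon)}g_{n}(z).$$
Taking the infimum over $F$, then $\frac{1}{n}\log$ and $n\to\infty$ (Remark \ref{ocover-pressure} makes the left-hand side a genuine limit), and finally $\epsilon\to 0$ and $\sup_{\mathcal{U}}$, produces $\widetilde{P}^{u}(f,\mathcal{G},x,\delta)\le Q^{u}(f,\mathcal{G},x,\delta)$.

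For $P^{u}\le\widetilde{P}^{u}$, I would reverse the roles: fix $\epsilon>0$ and choose $\mathcal{U}\in Cov^{O}_{(x,\delta)}$ of $d$-diameter small enough that any two points on a common unstable leaf at mutual $d$-distance less than $\mathrm{diam}(\mathcal{U})$ lie in a common local unstable ball of radius $\delta$; combined with (\ref{distance}) this forces their $d^{u}$-distance to be at most $C\,\mathrm{diam}(\mathcal{U})<\epsilon$. Then for $y\ne z$ lying in a common element of $\mathcal{U}^{n-1}_{0}\cap\overline{W^{u}(x,\delta)}$, the estimate $d^{u}(f^{j}y,f^{j}z)<\epsilon$ holds for $0\le j\le n-1$, so each element of any Borel refinement $\mathcal{V}\succeq\mathcal{U}^{n-1}_{0}$ meets a given $(n,\epsilon)\,u$-separated set $E\subset\overline{W^{u}(x,\delta)}$ in at most one point. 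This yields
$$\sum_{y\in E}g_{n}(y)\le\sum_{B\in\mathcal{V}}\sup_{z\in B\cap\overline{W^{u}(x,\delta)}}g_{n}(z),$$
hence $P_{n}^{u}(f,\mathcal{G},\epsilon,x,\delta)\le p_{n}^{u}(f,\mathcal{G},\mathcal{U},x,\delta)$. Taking $\limsup\frac{1}{n}\log$ and then $\epsilon\to 0$ completes the other direction.

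The main obstacle I anticipate is the geometric step in the second direction: (\ref{distance}) only compares $d$ and $d^{u}$ within a single local unstable ball of radius $\delta$, so to convert $d(f^{j}y,f^{j}z)<\mathrm{diam}(\mathcal{U})$ into $d^{u}(f^{j}y,f^{j}z)<\epsilon$ one must first guarantee that these iterates share such a local ball. This rests on the uniform local product structure of the continuous unstable foliation on the compact manifold $M$, which supplies a uniform scale $\eta_{0}>0$ below which $d$-closeness along a single leaf forces $d^{u}$-closeness. Once this is verified, the rest is a routine transcription of the classical open-cover / separated-set / spanning-set equivalence for topological pressure to the present sub-additive, unstable-leaf setting.
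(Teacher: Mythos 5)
Your first direction, $\widetilde{P}^{u}\le Q^{u}$, is argued through spanning sets and a Lebesgue-number argument. This is a valid and arguably cleaner route than the paper's, which compares $\widetilde{P}^{u}$ to $P^{u}$ directly: the paper greedily builds a maximal $(n,\epsilon)$ $u$-separated set $\{y_{1},\dots,y_{N}\}$ by choosing each $y_{i}$ to maximize $g_{n}$ on the remainder, takes $\mathcal{U}=\{B^{u}(f^{i}y_{j},\epsilon)\}$ and the disjoint Borel refinement of $\mathcal{U}^{n-1}_{0}$ made of Bowen-ball differences, and checks that $\sum_{A}\sup_{A}g_{n}=\sum_{j}g_{n}(y_{j})\le P^{u}_{n}$. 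Either route gives the first inequality.

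Your second direction, however, has a genuine gap precisely at the step you yourself flag as the main obstacle. You want to pass from $d(f^{j}y,f^{j}z)<\mathrm{diam}(\mathcal{U})$ to $d^{u}(f^{j}y,f^{j}z)<\epsilon$, and to do so you invoke a ``uniform scale $\eta_{0}$'' below which $d$-closeness of two points on a common unstable leaf forces them into a common local unstable ball, attributing this to the local product structure. No such scale exists: unstable leaves of a partially hyperbolic diffeomorphism generally accumulate on themselves (already for a linear Anosov map on $\mathbb{T}^{2}$ a single unstable leaf is dense), so there are pairs on the same leaf at arbitrarily small $d$-distance but arbitrarily large $d^{u}$-distance. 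The local product structure controls transversal closeness, not intrinsic leaf distance, and (\ref{distance}) is usable only in the direction $d\le d^{u}$, which is the opposite of what you need. The paper avoids the issue entirely by requiring $\mathrm{diam}(\mathcal{U})\le\epsilon$ \emph{in the $d^{u}$ metric}; then any element of a Borel refinement of $\mathcal{U}^{n-1}_{0}$ trivially has $d^{u}_{n}$-diameter at most $\epsilon$ and so contains at most one point of an $(n,\epsilon)$ $u$-separated set, with no metric conversion at all. Replacing your $d$-small cover by a $d^{u}$-small one makes the second direction go through.
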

\begin{proof}
First we show  $$\widetilde{P}^{u}(f, \mathcal{G}, x, \delta)\leq P^{u}(f, \mathcal{G}, x, \delta).$$
For any $\epsilon>0$, any $n\in \mathbb{N}$, any $x\in M$, and any $\delta>0$, we construct an $(n, \epsilon)$ $u$-separated set of $\overline{ W^{u}(x,\delta)}$ in the following way. 
Take $y_{1}\in \overline{ W^{u}(x,\delta)}$ such that $g_{n}(y_{1})=\sup\limits_{y\in \overline{ W^{u}(x,\delta)}}g_{n}(y)$, and then take $y_{2}\in \overline{ W^{u}(x,\delta)}\setminus B^{u}_{n}(y_{1},\epsilon)$ with $$g_{n}(y_{2})=\sup\limits_{y\in \overline{ W^{u}(x,\delta)}\setminus B^{u}_{n}(y_{1},\epsilon)}g_{n}(y).$$
Continue this procedure, in step $l$, choose $y_{l}\in \overline{ W^{u}(x,\delta)}\setminus \bigcup\limits^{l-1}_{i=1}B^{u}_{n}(y_{i},\epsilon)$ such that $$g_{n}(y_{l})=\sup\limits_{y\in \overline{ W^{u}(x,\delta)}\setminus \bigcup\limits^{l-1}_{i=1}B^{u}_{n}(y_{i},\epsilon)}g_{n}(y).$$
By compactness of $\overline{ W^{u}(x,\delta)}$, this process will stop at certain step $N$. Denote by $E=\{y_{1}, \cdots, y_{N}\}$ the set we obtain in this way. Then $E$ is an $(n, \epsilon)$ $u$-separated subset of $\overline{ W^{u}(x,\delta)}$.
Set $\mathcal{U}=\{B^{u}(f^{i}y_{j}, \epsilon)| \;0\leq i\leq n-1,\;1\leq j\leq N\}$.
It is obvious that $\mathcal{U}$ forms an open cover of $\overline{ W^{u}(x,\delta)}$ with $diam(\mathcal{U})\leq 2\epsilon$ in the $d^{u}$ metric.
Set $$\alpha=\{B^{u}_{n}(y_{1},\epsilon), B^{u}_{n}(y_{2},\epsilon)\setminus B^{u}_{n}(y_{1},\epsilon),\cdots, B^{u}_{n}(y_{N},\epsilon)\setminus\bigcup\limits^{N-1}_{i=1}B^{u}_{n}(y_{i},\epsilon)\}.$$
Then $\alpha$ is a Borel cover of $\overline{ W^{u}(x,\delta)}$ and $\alpha\succeq \mathcal{U}^{n-1}_{0}$.
 Then one has
$$\sum\limits_{A\in\alpha}\sup\limits_{y\in A\cap\overline{W^{u}(x,\delta)}}g_{n}(y)=\sum\limits^{N}_{i=1}g_{n}(y_{i})\leq P_{n}^{u}(f, \mathcal{G}, \epsilon, x, \delta).$$
Hence
$$\widetilde{P}^{u}(f, \mathcal{G},\mathcal{U}, x, \delta)\leq P_{n}^{u}(f, \mathcal{G}, \epsilon, x, \delta),$$
Let $\epsilon\rightarrow0$, by (\ref{cover-limit}) one has
$$\widetilde{P}^{u}(f, \mathcal{G},x, \delta)\leq P_{n}^{u}(f, \mathcal{G}, x, \delta).$$

Next we show the inverse inequality $$\widetilde{P}^{u}(f, \mathcal{G}, x, \delta)\geq P_{n}^{u}(f, \mathcal{G}, x, \delta).$$
Suppose $\mathcal{U}$ is an open cover of $\overline{W^{u}(x, \delta)}$ with $diam(\mathcal{U})\leq \epsilon$ in the $d^{u}$ metric. Given any Borel cover $\alpha$ of $\overline{W^{u}(x, \delta)}$ with $\alpha\succeq\mathcal{U}^{n-1}_{0}$, and any $(n, \epsilon)\;u$-separated subset $E$ of $\overline{W^{u}(x, \delta)}$, then any $y\in E$ is contained exactly in one member $A\cap \overline{W^{u}(x, \delta)}$ for some $A\in \alpha$.
So we get
$$\widetilde{P}^{u}(f, \mathcal{G},\mathcal{U}, x, \delta)\geq P_{n}^{u}(f, \mathcal{G}, \epsilon, x, \delta).$$
Combining with (\ref{sep-span 1}), we get the desired conclusion.
\end{proof}

Next we gather some basic properties of unstable topological pressure for sub-additive potentials. They follow in quite a similar way as in the classical case, see Theorem 9.7 in \cite{Walters2}, so we omit the proof to avoid redundancy.  
\begin{prop}\label{propeties}
Let $f : M \rightarrow M$ be a $C^{1}$-smooth partially hyperbolic diffeomorphism. Let $\mathcal{G} = \{\log g_{n}\}
_{n=1}^{\infty}$ and $\mathcal{H} = \{\log h_{n}\}_{n=1}^{\infty}$ be two sequences of sub-additive potentials of $f$ on $M$. Then the following
statements are true.
\begin{enumerate}
  \item $P^{u}(f,c+\mathcal{G})=P^{u}(f,\mathcal{G})+c, \;\text{where} \;c+\mathcal{G}=\{c+\log g_{n}\}_{n\in \mathbb{Z_{+}}}.$
  \item If $\mathcal{G}\leq \mathcal{H},\;i.e.\; \text{for any}\;n\geq 1,\;x\in M,\; f_{n}(x)\leq g_{n}(x),\;\text{then}$
  $$P^{u}(f,\mathcal{G})\leq P^{u}(f,\mathcal{H}),$$
  \item $P^{u}(f,\cdot)\;\text{is convex, that is to say},$
  $$P^{u}(f,p\mathcal{G}+(1-p)\mathcal{H})\leq pP^{u}(f,\mathcal{G})+(1-p)P^{u}(f,\mathcal{H}),$$
  $\text{where}\;p\mathcal{G}+(1-p)\mathcal{H}=\{p\log g_{n}+(1-p)\log h_{n}\}_{n\in \mathbb{Z_{+}}}.$
  \item $$P^{u}(f,\mathcal{G}+\mathcal{H})\leq P^{u}(f,\mathcal{G})+P^{u}(f,\mathcal{H}),$$ where $\mathcal{G}+\mathcal{H}=\{\log g_{n}+\log h_{n}\}_{n\in \mathbb{Z_{+}}}$
  \item $$P^{u}(f,c\mathcal{G})\leq cP^{u}(f,\mathcal{G}),\;\text{if}\;c\geq1,$$ and $$\;P^{u}(f,c\mathcal{G})\geq cP^{u}(f,\mathcal{G}),\;\text{if}\;c\leq1.$$
  \item If we further assume $\mathcal{H}$ is additive, then
  $$P^{u}(f,\mathcal{G}+\mathcal{H}\circ f-\mathcal{H})=P^{u}(f,\mathcal{G}),$$ where $\mathcal{G}+\mathcal{H}\circ f-\mathcal{H}=\{\log g_{n}+\log h_{n}\circ f-\log h_{n}\}_{n\in \mathbb{Z_{+}}}.$
  \item For any $k\in \mathbb{N}$, we have
$$P^{u}(f^{k},\mathcal{G}^{(k)})=kP^{u}(f,\mathcal{G})$$
where $f^{k} := \underbrace{\{f\circ\cdots\circ f\}}_{k \;\text{times}}$ and $\mathcal{G}^{(k)}:=\{\log g_{kn}\}_{n\in \mathbb{Z_{+}}}.$
\end{enumerate}
\end{prop}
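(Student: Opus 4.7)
The plan is to prove each of the seven items directly from the separated-set formulation in Definition \ref{sep-pressure}: I would verify the asserted inequality at the level of the finite quantity $P_n^{u}(f,\cdot,\epsilon,x,\delta)$ on a single $(n,\epsilon)$ $u$-separated set $E\subseteq\overline{W^u(x,\delta)}$, and then apply the operations $\tfrac{1}{n}\log$, $\limsup_{n\to\infty}$, $\lim_{\epsilon\to 0}$, $\sup_{x\in M}$, $\lim_{\delta\to 0}$ that assemble $P^u(f,\cdot)$. Each of these operations preserves the relevant ordering, so one-sided bounds at the $P_n^{u}$ level will suffice to pass to the limits.

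Items (1)--(4) reduce to elementary inequalities for positive reals. For (1), the constant shift scales $g_n(y)$ by the appropriate multiplicative factor, and $\tfrac{1}{n}\log$ turns this into an additive shift; (2) is immediate from $g_n\le h_n$. Convexity (3), applied on a fixed $E$, is exactly H\"older: $\sum_{y\in E}g_n(y)^{p}h_n(y)^{1-p}\le\bigl(\sum_{y\in E}g_n(y)\bigr)^{p}\bigl(\sum_{y\in E}h_n(y)\bigr)^{1-p}$. Item (4) follows from the crude but adequate bound $\sum_{y\in E}g_n(y)h_n(y)\le\bigl(\sum_{y\in E}g_n(y)\bigr)\bigl(\sum_{y\in E}h_n(y)\bigr)$, valid since all terms are nonnegative.

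For (5) I would invoke $\sum_{y\in E}g_n(y)^{c}\le\bigl(\sum_{y\in E}g_n(y)\bigr)^{c}$ when $c\ge 1$ (each ratio $g_n(y)/\sum_z g_n(z)$ lies in $[0,1]$) and the reverse inequality when $c\le 1$ by concavity of $t\mapsto t^{c}$. For (6), additivity of $\mathcal{H}$ means $\log h_n(x)=\sum_{i=0}^{n-1}\phi(f^{i}x)$ for some continuous $\phi\colon M\to\mathbb{R}$, so the sum telescopes to $\log h_n(fx)-\log h_n(x)=\phi(f^{n}x)-\phi(x)$, whose absolute value is bounded by $2\|\phi\|_{\infty}<\infty$; the multiplicative factor $e^{\pm 2\|\phi\|_{\infty}}$ this contributes to each separated sum is annihilated by $\tfrac{1}{n}\log$ as $n\to\infty$.

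The one item requiring genuine care is (7), which I expect to be the main obstacle. The direct comparison $(d^{u})_{n}^{f^{k}}(x,y)=\max_{0\le j\le n-1}d^{u}(f^{kj}x,f^{kj}y)\le\max_{0\le i\le kn-1}d^{u}(f^{i}x,f^{i}y)=(d^{u})_{kn}^{f}(x,y)$ shows that every $(n,\epsilon)$ $u$-separated set for $f^{k}$ is automatically $(kn,\epsilon)$ $u$-separated for $f$. Combined with $\mathcal{G}^{(k)}_{n}=\log g_{kn}$, this yields $P_{n}^{u}(f^{k},\mathcal{G}^{(k)},\epsilon,x,\delta)\le P_{kn}^{u}(f,\mathcal{G},\epsilon,x,\delta)$, whence one direction $P^{u}(f^{k},\mathcal{G}^{(k)})\le kP^{u}(f,\mathcal{G})$. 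The reverse is the delicate part: a $(kn,\epsilon)$ $u$-separated set for $f$ is, in general, only $(n,\epsilon')$ $u$-separated for $f^{k}$, where $\epsilon'$ is a joint uniform-continuity modulus at scale $\epsilon$ for $f,f^{2},\ldots,f^{k-1}$ restricted to unstable leaves. Since $\epsilon'\to 0$ as $\epsilon\to 0$, the outer $\lim_{\epsilon\to 0}$ absorbs this distortion and delivers the matching lower bound. This parallels the classical argument in Walters \cite{Walters2}, and partial hyperbolicity introduces no new obstruction beyond the bookkeeping needed to keep every constructed set inside $W^{u}(x,\delta)$.
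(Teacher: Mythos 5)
Your overall strategy -- verifying each inequality at the level of $P_{n}^{u}(f,\cdot,\epsilon,x,\delta)$ on a fixed $(n,\epsilon)$ $u$-separated set and then passing to the limits -- is the right one, and it is essentially the route the paper has in mind when it defers to Walters. Items (2)--(6) go through as you sketch: H\"older is exactly right for (3), the pointwise product bound for (4) is fine, the sub/super-additivity of $t\mapsto t^{c}$ on $[0,\infty)$ for $c\ge 1$ versus $0\le c\le 1$ handles (5) (note the implicit restriction $c\ge 0$, without which $c\mathcal G$ fails to be sub-additive), and the telescoping for (6) is correct. One small caveat on (1): as the proposition literally defines $c+\mathcal G=\{c+\log g_n\}$, your multiplicative factor is $e^{c}$, and dividing by $n$ then sends the shift to $0$, giving $P^{u}(f,c+\mathcal G)=P^{u}(f,\mathcal G)$ rather than $P^{u}(f,\mathcal G)+c$. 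The intended convention (matching the additive case, where adding $c$ to $\phi$ adds $nc$ to $S_n\phi$) must be $c+\mathcal G=\{nc+\log g_n\}$; you should state that explicitly rather than leave "the appropriate multiplicative factor" unspecified.

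The real gap is in the $\ge$ direction of (7). Your comparison produces
$P_{kn}^{u}(f,\mathcal G,\epsilon,x,\delta)\le P_{n}^{u}(f^{k},\mathcal G^{(k)},\epsilon',x,\delta)$,
and dividing by $kn$ and passing to the upper limit only controls
$\limsup_{n}\tfrac{1}{kn}\log P_{kn}^{u}(f,\mathcal G,\epsilon,x,\delta)$,
i.e.\ the upper limit along the arithmetic progression $\{kn\}$. What is needed is the full
$\limsup_{m}\tfrac{1}{m}\log P_{m}^{u}(f,\mathcal G,\epsilon,x,\delta)$. In the additive case one bridges $m$ to a nearby multiple of $k$ using the two-sided bound $|S_m\phi - S_{m'}\phi|\le |m-m'|\,\|\phi\|_{\infty}$. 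For a sub-additive family the only available estimate is one-sided: writing $m=kn'+r$ with $0\le r<k$ gives $\log g_{m}\le \log g_{kn'}+C$, but an $(m,\epsilon)$ $u$-separated set need not be $(kn',\epsilon)$ $u$-separated, while for the larger multiple $k(n'+1)\ge m$ one only gets $\log g_{k(n'+1)}\le \log g_{m}+C$, which is the wrong direction. So the subsequence limsup and the full limsup cannot be identified by this argument, and $\lim_{\epsilon\to 0}$ does not repair this: the modulus $\epsilon'$ handles the separation scale, not the index mismatch.

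The natural repair, consistent with the machinery the paper has already built, is to prove (7) through the open-cover formulation $\widetilde{P}^{u}$, which the paper shows equals $P^{u}$. For any $\mathcal U\in Cov^{O}_{(x,\delta)}$ set $\mathcal W=\bigvee_{r=0}^{k-1}f^{-r}\mathcal U$; then $\bigvee_{j=0}^{n-1}(f^{k})^{-j}\mathcal W=\bigvee_{i=0}^{kn-1}f^{-i}\mathcal U$, so one has the exact identity $p_{n}^{u}(f^{k},\mathcal G^{(k)},\mathcal W,x,\delta)=p_{kn}^{u}(f,\mathcal G,\mathcal U,x,\delta)$. Because $\{\log p_{n}^{u}\}_{n}$ is sub-additive, $\tfrac{1}{n}\log p_{n}^{u}$ has a genuine limit, hence $\widetilde{P}^{u}(f^{k},\mathcal G^{(k)},\mathcal W,x,\delta)=k\,\widetilde{P}^{u}(f,\mathcal G,\mathcal U,x,\delta)$ with no subsequence issue; taking suprema over $\mathcal U$ (and combining with your correct $\le$ direction) gives (7). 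If you insist on the separated-set route, you would need a supplementary argument showing $\limsup_{m}\tfrac{1}{m}\log P_{m}^{u}$ agrees with its restriction to $\{kn\}$, which does not follow from sub-additivity alone.
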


\begin{rem}
It is not clear whether the result of $(6)$ is still true if $\mathcal{H}$ is only  sub-additive.
\end{rem}

\section{The proof of the variational principle}
The following result is well known, see e.g. Lemma 9.9 of \cite{Walters2}.
\begin{lem}\label{lemma 2.9}
Given $0\leq  p_{1},\cdots,p_{m}\leq 1$ with $\sum^{m}_{i=1}p_{i}=1$, and
$a_{1},\cdots ,a_{m}\in\mathbb{R}$, then
$$\sum^{m}_{i=1}p_{i}(a_{i}-\log p_{i})\leq \log\sum^{m}_{i=1}e^{a_{i}},$$
and the equality holds if and only if  $p_i=\dfrac{e^{a_{i}}}{\sum^{m}_{i=1}e^{a_{i}}}$.
\end{lem}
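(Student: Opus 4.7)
The statement is a classical Gibbs/Jensen inequality, and the natural route is to deduce it from the concavity of $\log$. Concretely, I would first reduce to the case in which every $p_i$ is strictly positive: any index with $p_i=0$ contributes $0$ to the left side under the usual convention $0\log 0=0$ while only decreasing the right side if we were to drop it, so it suffices to establish the inequality after omitting indices with $p_i=0$. Then, writing
$$\sum_{i=1}^{m} p_i(a_i-\log p_i)=\sum_{i=1}^{m} p_i\log\!\Bigl(\frac{e^{a_i}}{p_i}\Bigr),$$
I would apply Jensen's inequality to the concave function $\log$ with weights $p_i$ and values $x_i=e^{a_i}/p_i$. This yields
$$\sum_{i=1}^{m} p_i\log\!\Bigl(\frac{e^{a_i}}{p_i}\Bigr)\le \log\!\Bigl(\sum_{i=1}^{m} p_i\cdot\frac{e^{a_i}}{p_i}\Bigr)=\log\sum_{i=1}^{m} e^{a_i},$$
which is exactly the asserted inequality.

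For the equality clause I would invoke the \emph{strict} concavity of $\log$ on $(0,\infty)$: equality in Jensen's inequality forces $x_i=e^{a_i}/p_i$ to be a common constant $c$ for all $i$ with $p_i>0$. Summing $p_i=e^{a_i}/c$ over those indices gives $c=\sum_{i:p_i>0}e^{a_i}$. If some $p_j$ vanished, then $e^{a_j}$ would be missing from this sum, so $c<\sum_{i=1}^m e^{a_i}$, and the left side would be strictly below $\log\sum_i e^{a_i}$. Hence equality requires $p_i>0$ for every $i$, in which case $c=\sum_{i=1}^{m}e^{a_i}$ and $p_i=e^{a_i}/\sum_{j=1}^{m}e^{a_j}$; conversely, substituting this choice of $p_i$ back into the left side gives exactly $\log\sum_i e^{a_i}$.

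Since the result is entirely algebraic, there is no genuine obstacle; the only small care needed is the bookkeeping around zero weights and the use of \emph{strict} (as opposed to weak) concavity to pin down the equality case. An alternative, essentially equivalent, presentation would set $q_i=e^{a_i}/\sum_j e^{a_j}$ and rewrite the desired inequality as $-\sum_i p_i\log(p_i/q_i)\le 0$, i.e.\ the nonnegativity of the Kullback--Leibler divergence $D(p\,\|\,q)$, which follows from $\log t\le t-1$ with equality iff $t=1$; this variant makes the equality case slightly cleaner. Either route gives a short, self-contained proof of Lemma \ref{lemma 2.9}.
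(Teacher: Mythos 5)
Your proof is correct. The paper itself gives no proof of this lemma, only a pointer to Lemma 9.9 of Walters' book, and your Jensen/concavity argument (together with the Kullback--Leibler reformulation you sketch as an alternative) is exactly the standard proof found there, including the careful treatment of zero weights and the use of strict concavity to characterize equality.
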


The following power rule for unstable entropy is also straightforward.
\begin{lem}\label{lemma 2.10} For any $\mu\in\mathcal{M}_{f}(M)$, one has $h_{\mu}(f^k | \eta)=kh_{\mu}(f | \eta)$, and hence
$h^{u}_{\mu}(f^{k})=kh^{u}_{\mu}(f).$
\end{lem}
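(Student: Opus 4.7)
The plan is to reduce the claim about $f^k$ to the analogous statement for $f$ by identifying a power-rule partition $\beta = \alpha_0^{k-1}$ and exploiting the fact that $\eta \in \mathcal{P}^u$ is simultaneously subordinate to the unstable foliation for $f$ and for $f^k$ (since $E^u$ and hence $W^u$ is invariant under all powers of $f$, and Definition 2.1 depends only on $W^u$, not on the dynamics within). Consequently the same family $\mathcal{P}^u$ serves both systems, and one may compare $h_\mu(f^k|\eta)$ with $h_\mu(f|\eta)$ by manipulating joins of $f^{-i}\alpha$.

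The two inequalities I would verify are as follows. For the inequality $h_\mu(f^k|\eta)\ge k h_\mu(f|\eta)$, fix $\alpha\in\mathcal{P}$ and set $\beta=\alpha_0^{k-1}=\bigvee_{i=0}^{k-1}f^{-i}\alpha$; since elements of $\beta$ are intersections of elements of $\alpha$, they still have diameter at most $\epsilon_0$, so $\beta\in\mathcal{P}$. A direct computation gives
\[
\bigvee_{j=0}^{n-1}(f^k)^{-j}\beta \;=\;\bigvee_{i=0}^{kn-1}f^{-i}\alpha\;=\;\alpha_0^{kn-1},
\]
hence
\[
\tfrac1n H_\mu\!\Bigl(\bigvee_{j=0}^{n-1}(f^k)^{-j}\beta\,\Big|\,\eta\Bigr)=k\cdot\tfrac{1}{kn}H_\mu(\alpha_0^{kn-1}|\eta).
\]
Passing to the limit (valid by the Hu–Hua–Wu result cited in the remark, which promotes the $\limsup$ to a genuine limit) yields $h_\mu(f^k,\beta|\eta)=k\,h_\mu(f,\alpha|\eta)$. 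Taking $\sup_\alpha$ gives one direction.

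For the reverse inequality $h_\mu(f^k|\eta)\le k h_\mu(f|\eta)$, fix an arbitrary $\gamma\in\mathcal{P}$. Since $\bigvee_{j=0}^{n-1}f^{-kj}\gamma$ is coarser than $\bigvee_{i=0}^{kn-1}f^{-i}\gamma=\gamma_0^{kn-1}$ and conditional entropy is monotone under refinement,
\[
\tfrac1n H_\mu\!\Bigl(\bigvee_{j=0}^{n-1}f^{-kj}\gamma\,\Big|\,\eta\Bigr)\le k\cdot\tfrac{1}{kn}H_\mu(\gamma_0^{kn-1}|\eta),
\]
so $h_\mu(f^k,\gamma|\eta)\le k\,h_\mu(f,\gamma|\eta)\le k\,h_\mu(f|\eta)$, and $\sup_\gamma$ finishes the bound. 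Combining both gives $h_\mu(f^k|\eta)=k\,h_\mu(f|\eta)$, and taking $\sup_{\eta\in\mathcal{P}^u}$ on both sides (using that $\mathcal{P}^u$ is common to $f$ and $f^k$) delivers $h^u_\mu(f^k)=k\,h^u_\mu(f)$.

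The main obstacle, such as it is, is conceptual rather than computational: one must make sure the conditional entropies are compared over a partition $\eta$ that is legitimate for both $f$ and $f^k$, and that the $\tfrac1n H_\mu(\cdot|\eta)$ sequence admits a limit (not merely a $\limsup$) so that the substitution $n\rightsquigarrow kn$ is harmless. Both points are cleanly handled by the remark following Definition 2.1 and the $Df$-invariance of $E^u$; once they are in hand, the argument is the routine power-rule manipulation recalled above.
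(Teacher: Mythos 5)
The paper states this lemma without proof, calling it ``straightforward,'' and your argument is a correct and complete proof via the standard power-rule computation: $\beta=\alpha_0^{k-1}$ for the $\ge$ direction, and monotonicity of conditional entropy applied to $\bigvee_{j=0}^{n-1}f^{-kj}\gamma\preceq\gamma_0^{kn-1}$ for the $\le$ direction, followed by $\sup_\eta$. One point worth keeping explicit (which you do flag): the $\ge$ direction genuinely needs convergence of $\tfrac1n H_\mu(\alpha_0^{n-1}|\eta)$, not merely a $\limsup$, since restricting to the subsequence $kn$ would otherwise only give an upper bound; this is supplied by Lemma 2.8 of Hu--Hua--Wu, and combined with the observation that $E^u$ (hence $W^u$ and $\mathcal{P}^u$) is common to $f$ and $f^k$, the argument is complete.
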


\textbf{Now we proceed to prove Theorem \ref{variational principle}.}
\begin{proof}
 We divide the proof into three steps.

$\mathit{Step\, 1}.$
In this step will show that for any $\mu \in \mathcal{M}_{f}(M)$, one has
 $$h^{u}_{\mu}(f)+\mathcal{G}_{*}(\mu)\leq P^{u}(f,\mathcal{G}).$$
 

Take any finite partition $\alpha=\{A_{0}, A_{1}, \cdots, A_{k}\}$ of $M$, such that $\mbox{diam}(A_i)\leq \epsilon_{0}$, $ A_{i}$ is compact for $1\leq i\leq k$, and $A_{0}=M\setminus\cup^{k}_{i=1}A_{i}$.  Then for any $n\in \mathbb{N}$, based on the definition of conditional measure and Lemma \ref{lemma 2.9},  one has 
$$
\begin{aligned}
&H_\mu(\alpha_{0}^{n-1}|\eta)+\int_{M}\log g_{n}(x)d\mu(x)\\
&=\int_{M}\log g_{n}(x)-\log\mu^{\eta}_{x}(\alpha_{0}^{n-1}(x))d\mu(x)\\
&=\int_{M}\int_{\eta(x)}\left(\log g_{n}(y)-\log\mu^{\eta}_{x}(\alpha_{0}^{n-1}(y))\right)d\mu^{\eta}_x(y)d\mu(x)\\
&\leq\int_M\left(\sum_{A\in\alpha^{n-1}_0\cap\eta(x)}\mu^{\eta}_x(A)(\sup\limits_{y\in A}\log g_n(y)-\log \mu^{\eta}_x(A))\right)d\mu\\
&\leq\sup\limits_{B\in\eta}\log\left(\sum\limits_{A\in\alpha_{0}^{n-1}\cap B}\sup\limits_{y\in A}g_{n}(y)\right).
\end{aligned}
$$
Since $\text{diam}(\eta) \leq \epsilon_{0}$, then for any $B\in\eta$, there exists $x\in M$ and $\delta>0$ such that $B\subseteq\overline{W^u(x,\delta)}$. Then 
$$H_\mu(\alpha_{0}^{n-1}|\eta)+\int_{M}\log g_{n}(x)d\mu(x)\leq\sup\limits_{x\in M}\log\left(\sum\limits_{A\in\alpha_{0}^{n-1}}\sup\limits_{x\in A\cap\overline{W^u(x,\delta)}}g_{n}(x)\right).$$

Set $b=\min\{d(A_{i}, A_{j}):i,j=1,\cdots,k,\,i\neq j\}$, for any $\epsilon>0$ with $\epsilon<b/2$ and any $n\in\mathbb{N}$, we shall construct an $(n,\epsilon)$ $u$-separated set of $\overline{W^u(x,\delta)}$ to obtain an estimation of $P^u_n(f, \mathcal{G}, \epsilon, x, \delta)$.

Let $ \mathcal{M}=\{ C = A\cap\overline{W^u(x,\delta)}| A\in\alpha_{0}^{n-1}\}$. For each $C\in \mathcal{M}$, choose some $x(C)\in \overline{C}$ such that $g_n\left(x(C)\right)=\sup\limits_{y\in C}g_n(y)$.
We claim that for each $C\in \mathcal{M}$, there are at most $2^{n}$ different $\widetilde{C}$ in $\mathcal{M}$, such that $d^{u}_{n}\left(x(C), x(\widetilde{C})\right)<\epsilon$.
To see this claim, for each $C\in \mathcal{M}$, pick up the unique index tuple $(i_{0}(C), i_{1}(C), . . . , i_{n-1}(C)) \in \{0, 1, 2, . . . , k\}^{n}$
such that
$$C= (A_{i_{0}(C)}\cap f^{-1}A_{i_{1}(C)}\cap f^{-2}A_{i_{2}(C)}\cap \cdots\cap f^{-(n-1)}A_{i_{n-1}(C)})\cap \overline{W^u(x,\delta)}.$$
Now fix a $C \in \mathcal{M}$ and let $\mathcal{Y}$ denote the collection of all $\widetilde{C}$ with $d^{u}_{n}\left(x(C), x(\widetilde{C})\right)<\epsilon,$ then we
have
$$\sharp\{i_{l}(\widetilde{C})|\widetilde{C}\in\mathcal{Y}\}\leq 2,l=0,1,\cdots,n-1.$$
To see this inequality, we assume on the contrary that there exists a $l$, and $\widetilde{C_{1}},\,\widetilde{C_{2}},\,\widetilde{C_{3}}\in\mathcal{Y}$,
 such that $i_{l}( \widetilde{C_{1}}), i_{l}( \widetilde{C_{2}}), i_{l}( \widetilde{C_{3}})$ are distinct. Without loss of generality, we assume $i_{l}( \widetilde{C_{1}})\neq 0,\, i_{l}( \widetilde{C_{2}})\neq0$. This implies
$$
\begin{aligned}
d^{u}_{n}\left(x(\widetilde{C_{1}}), x(\widetilde{C_{2}})\right)&\geq d^{u}_{n}\left(f^{l}(x(\widetilde{C_{1}})), f^{l}(x(\widetilde{C_{2}}))\right)\\
&\geq d^{u}_{n}\left(A_{i_{l}(\widetilde{C_{1}})},A_{i_{l}(\widetilde{C_{2}})}\right)\geq b >2\epsilon,
\end{aligned}
$$ which leads to a contradiction.

Now we choose an element $C_{1} \in \mathcal{M},$ such that $$g_{n}\left(x(C_{1})\right) =\max\limits_{C\in\mathcal{M}}g_{n}\left(x(C)\right).$$
Let $\mathcal{Y}_{1}$ denote the collection of all $\widetilde{C}\in\mathcal{Y}$ with $d^{u}_{n}\left(x(C_{1}), x(\widetilde{ C})\right) < \epsilon.$ Then the cardinality of $\mathcal{Y}_{1}$ does not exceed $2^{n}$. If the collection $\mathcal{M}\setminus\mathcal{Y}_{1}$ is not empty, we choose an element $C_{2} \in \mathcal{M}\setminus\mathcal{Y}_{1}$ such that
$$g_{n}\left(x(C_{2})\right) =\max\limits_{C\in \mathcal{M}\setminus\mathcal{Y}_{1}}g_{n}(x(C)).$$
Let $\mathcal{Y}_{2}$ be the collection of $\widetilde{C}\in \mathcal{M}\setminus\mathcal{Y}_{1}$ with $d^{u}_{n}(x(C_{2}), x(\widetilde{C}))<\epsilon.$ We continue this process, in step $m$ we choose an element $C_{m}\in \mathcal{M}\setminus\cup^{m-1}_{i=1}\mathcal{Y}_{i}$
such that
$$g_{n}(x(C_{m})) =\max\limits_{C\in\mathcal{M}\setminus\cup^{m-1}_{i=1}\mathcal{Y}_{i}}g_{n}(x(C)).$$
Let $\mathcal{Y}_{m}$ be the collection of $\widetilde{C} \in\mathcal{M}\setminus\cup^{m-1}_{i=1}\mathcal{Y}_{i}$ with $d^{u}_{n}(x(C_{m}), x(\widetilde{C}))< \epsilon.$ Since the partition is finite, the process above will stop at some step $m$. Set $E = \{x(C_{j})\mid j =1, 2, . . .,m\}$. Then $E$ is an $(n,\epsilon)$ $u$-separated set of $\overline{W^u(x,\delta)}$.

For each $\mathcal{Y}_{j}$, we have
$$\sum_{C\in \mathcal{Y}_{j}}g_{n}(x(C))\leq 2^{n}g_{n}(x(C_{j})).$$
Then
$$
\begin{aligned}
\sum_{y\in E}g_{n}(y)=\sum^{m}_{j=1}g_{n}(x(C_{j}))&\geq\sum^{m}_{j=1}\frac{1}{2^{n}}\sum_{C\in\mathcal{Y}_{j}}g_{n}(x(C))\\&=\frac{1}{2^{n}}\sum_{A\in\alpha_{0}^{n-1}}\sup\limits_{x\in A\cap\overline{W^u(x,\delta)}}g_{n}(x).
\end{aligned}
$$
Hence
$$\sum_{A\in\alpha_{0}^{n-1}}\sup\limits_{x\in A\cap\overline{W^u(x,\delta)}}g_{n}(x)\leq 2^{n}\sum_{y\in E}g_{n}(y),$$
and
$$
\begin{aligned}
H_\mu(\alpha_{0}^{n-1}|\eta)+\int_{M}\log g_{n}(x)d\mu(x)&\leq\sup\limits_{x\in M}\log\left(\sum\limits_{A\in\alpha_{0}^{n-1}}\sup\limits_{x\in A\cap\overline{W^u(x,\delta)}}g_{n}(x)\right)\\
&\leq\sup\limits_{x\in M}\log(2^{n}\sum_{y\in E}g_{n}(y)).
\end{aligned}
$$
Divided by $n$ on both sides, one has
$$
\begin{aligned}
\frac{1}{n}H_\mu(\alpha_{0}^{n-1}|\eta)+\frac{1}{n}\int_{M}\log g_{n}(x)d\mu(x)&\leq\log2+\frac{1}{n}\sup\limits_{x\in M}\log(\sum_{y\in E}g_{n}(y))\\
&\leq\log2+\frac{1}{n}\log P_{n}^{u}(f,\mathcal{G},\epsilon,\overline{W^u(x,\delta)}).
\end{aligned}
$$
Take the limit superior with $n\rightarrow\infty$, one gets 
$$h_{\mu}(\alpha|\eta)+\mathcal{G}_{*}(\mu)\leq\log 2+\sup\limits_{x\in M}P^{u}(f,\mathcal{G},\epsilon,\overline{W^u(x,\delta)}).$$
Let $\epsilon\rightarrow 0,\,\delta\rightarrow 0$, one has
$$h^{u}_{\mu}(f)+\mathcal{G}_{*}(\mu)\leq\log2+P^{u}(f,\mathcal{G}).$$
Since this inequality holds for all diffeomorphisms and sub-additive potentials, thus we
can apply it to $f^{k}$ and $\mathcal{G}^{(k)}$ and get
$$h^{u}_{\mu}(f^{k})+\mathcal{G}^{(k)}_{*}(\mu)\leq\log2+P^{u}(f^{k},\mathcal{G}^{(k)}).$$
By Proposition \ref{propeties} and Lemma \ref{lemma 2.10}, we have
$$h^{u}_{\mu}(f)+\mathcal{G}_{*}(\mu)\leq\frac{\log2}{k}+P^{u}(f,\mathcal{G}).$$
Since $k$ is arbitrary, we have
$$h^{u}_{\mu}(f)+\mathcal{G}_{*}(\mu)\leq P^{u}(f,\mathcal{G}).$$

$\mathit{Step \,2}.$
In this step we show that for any $\rho>0$, there exists a $\mu\in \mathcal{M}_f(M)$ satisfying $$h^u_\mu(f)+\mathcal{G}_*(\mu)\geq P^u(f, \mathcal{G})-\rho.$$ The argument is quite similar to the case of additive potentials, as well as the classical case of entropy, we exhibit the full detail for the convenience of readers.
By the definition of $P^u(f, \mathcal{G})$, for any $\delta>0$ small enough, one can take $x\in M$ such that
$$P^u\left(f, \mathcal{G}, \overline{W^u(x, \delta)}\right)\geq P^u(f, \mathcal{G})-\rho.$$
For any small $\epsilon>0$, suppose $E_n$ be an $(n, \epsilon)$ $u$-separated set of $\overline{W^u(x, \delta)}$ whose cardinality is denoted by $N^u(f, \epsilon, n, x, \delta)$ satisfying
$$\log\sum_{y\in E_n}g_n(y)\geq \log P^u_n(f, \mathcal{G}, \epsilon, x, \delta)-1.$$
Define
$$\nu_n:=\dfrac{\sum_{y\in E_n}g_n(y)\delta_y}{\sum_{z\in E_n}g_n(z)}\;\text{and}\;\mu_n:=\dfrac{1}{n}\sum_{i=0}^{n-1}\nu_n\circ f^{-i}.$$
By compactness of $\mathcal{M}(M)$ equipped with the weak*-topology, we can find a subsequence $\{n_k\}$ such that $\{\mu_{n_k}\}$ converges, say $\lim\limits_{k \to \infty}\mu_{n_k}=\mu$, obviously $\mu\in\mathcal{M}_f(M)$. Next we show that  $\mu$ fits in our purpose.

For some $\delta$ small enough, pick $\eta\in\mathcal{P}^u$ such that $W^u(x, \delta)\subset\eta(x)$. Then one can choose a $\alpha\in\mathcal{P}$ with $\mu(\partial\alpha)=0$, and $\text{diam}(\alpha)<\epsilon/C$ where $C>1$ is as in (\ref{distance}). Hence $\log N^u(f, \epsilon, n, x, \delta)=H_{\nu_n}(\alpha_{0}^{n-1}|\eta).$ For any given $q>1$,  put $a(j)=[\dfrac{n-j}{q}]$, where $n$ is a natural number with $n>q$ and $j=0,1,...,q-1$.
So
$$\bigvee_{i=0}^{n-1}f^{-i}\alpha=\bigvee_{r=0}^{a(j)-1}f^{-(rq+j)}\alpha_{0}^{q-1}\vee\bigvee_{t\in S_j}f^{-t}\alpha,$$ where $S_j=\{0,1,..., j-1\}\cup\{j+qa(j),..., n-1\}$.

For any $\alpha\in\mathcal{P}$, suppose $\alpha^u$ is the partition in $\mathcal{P}^u$ whose elements are given by $\alpha^u(x)=\alpha(x)\cap W^u_{loc}(x)$. Then
 \begin{align}\label{(13)}
f^{rq}\left(\bigvee_{i=0}^{r-1}f^{-iq}\alpha_{0}^{q-1}\vee f^j\eta\right)
=f^{rq}(\alpha_{0}^{rq-1}\vee f^j\eta)
=f\alpha\vee...\vee f^{rq}\alpha\vee f^{rq+j}\eta\geq f\alpha^u,
\end{align}
and
\begin{equation}\label{(14)}
H_{\nu}\left(\bigvee_{r=0}^{a(j)-1}f^{-(rq+j)}\alpha_{0}^{q-1}|\eta\right)=H_{f^j_*\nu}\left(\bigvee_{r=0}^{a(j)-1}f^{-rq}\alpha^{q-1}_{0}|f^j\eta\right), \forall\, \nu\in\mathcal{M}_f(M).
\end{equation}
On one hand, Lemma \ref{lemma 2.9} implies that
$$\begin{aligned}
H_{\nu_n}(\alpha_{0}^{n-1}|\eta)+\int_{M}\log g_{n}d\nu_n&=\sum_{y\in E_n}\nu_n(\{y\})\left(-\log\nu_n(\{y\})+\log g_n(y)\right)\\&=\log\sum_{y\in E_n}g_n(y).
\end{aligned}
$$ On the other hand, $$\begin{aligned}
&H_{\nu_n}(\alpha_{0}^{n-1}|\eta)+\int_M \log g_n d\nu_n\\
=&H_{\nu_n}\left(\bigvee_{r=0}^{a(j)-1}f^{-(rq+j)}\alpha_{0}^{q-1}\vee\bigvee_{t\in S_j}f^{-t}\alpha|\eta\right)+\int_M \log g_n d\nu_n\\
\leq&\sum_{t\in S_j}H_{\nu_n}(f^{-t}\alpha|\eta)+H_{\nu_n}\left(\bigvee_{r=0}^{a(j)-1}f^{-(rq+j)}\alpha_{0}^{q-1}|\eta\right)+\int_M \log g_n d\nu_n\\
=&\sum_{t\in S_j}H_{\nu_n}(f^{-t}\alpha|\eta)+H_{f^{j}\nu_n}\left(\bigvee_{r=0}^{a(j)-1}f^{-rq}\alpha_{0}^{q-1}|f^{j}\eta\right)+\int_M \log g_n d\nu_n.
\end{aligned}
$$ While by Lemma 2.6 in \cite{Hu1}, combining with (\ref{(13)}) and (\ref{(14)}), one has
$$
\begin{aligned}
&H_{f^j_*\nu_{n}}(\bigvee_{r=0}^{a(j)-1}f^{-rq}\alpha_{0}^{q-1}|f^j\eta)\\
=&H_{f^i_*\nu_n}(\alpha_{0}^{q-1}|f^j\eta)+\sum_{r=1}^{a(j)-1}H_{f^{j}_*\nu_n}\left(\alpha_0^{q-1}|f^{rq}(\bigvee_{i=0}^{r-1}f^{-iq}\alpha_{0}^{q-1}\vee f^j\eta)\right)\\
\leq&H_{f^j_*\nu_n}(\alpha_{0}^{q-1}|f^j\eta)+\sum_{r=1}^{a(j)-1}H_{f^{i}_*\nu_n}(\alpha_0^{q-1}|f\alpha^u).
\end{aligned}
$$
It is clear that $|S_j|\leq 2q$ and assume $|\alpha|=d$. Add up the inequalities above over $j$ from $0$ to $q-1$, and divided by $n$,  one gets 
\begin{align}\label{15}
&\dfrac{q}{n}\log\sum_{y\in E_n}g_n(y)
\leq\dfrac{1}{n}\sum_{j=0}^{q-1}\sum_{t\in S_j}H_{\nu_n}(f^{-t}\alpha\mid\eta)+\dfrac{1}{n}\sum_{j=0}^{q-1}H_{f^j_*\nu_n}(\alpha_{0}^{q-1}\mid f^j\eta)\notag\\
&+\dfrac{1}{n}\sum_{i=0}^{n-1}H_{f^j_*\nu_n}(\alpha_{0}^{q-1}\mid f\alpha^u)+\dfrac{q}{n}\int_M \log g_n d\nu_n\notag\\
\leq&\dfrac{2q^2}{n}\log d+\dfrac{1}{n}\sum_{j=0}^{q-1}H_{f^j_*\nu_n}(\alpha_{0}^{q-1}\mid f^j\eta)+H_{\mu_n}(\alpha_{0}^{q-1}\mid f\alpha^u)+\dfrac{q}{n}\int_M \log  g_n d\nu_n.
\end{align}

Let $\{n_k\}$ be a sequence of natural numbers satisfying
\\$(1) \,\mu_{n_k}\rightarrow \mu$ as $k\rightarrow \infty.$
\\$(2)\,\lim\limits_{k \to \infty}\dfrac{1}{n_k}\log P^u_{n_k}(f, \mathcal{G}, \epsilon, x, \delta)=\limsup\limits_{n \to \infty}\dfrac{1}{n}\log P^u_{n}(f, \mathcal{G}, \epsilon, x, \delta).$
Since $\mu(\partial\alpha)=0$ and $\mu\in\mathcal{M}_{f}(M)$, then for any $q\in\mathbb{N}$, one has $\mu(\partial\alpha_{0}^{q-1})=0$. By upper semi-continuity of the unstable metric entropy, one has that $$\limsup_{k \to \infty}H_{\mu_{n_k}}(\alpha_{0}^{q-1}\mid f\alpha^u)\leq H_{\mu}(\alpha_{0}^{q-1}\mid f\alpha^u).$$
Now we can deduce from (\ref{15}) that $$q\limsup\limits_{n \to \infty}\dfrac{1}{n}\log P^u_{n}(f, \mathcal{G}, \epsilon, x, \delta)\leq H_{\mu}(\alpha^{q-1}_{0}\mid f\alpha^u)+q\mathcal{G}_*(\mu),$$ and so $$\begin{aligned}
P^u\left(f, \mathcal{G}, \overline{W(x,\delta)}\right)&\leq \lim\limits_{q\to\infty}\dfrac{1}{q}H_{\mu}(\alpha^{q-1}_{0}\mid f\alpha^u)+\mathcal{G}_*(\mu)\\
&=h^u_{\mu}(f)+\mathcal{G}_*(\mu).
\end{aligned}
$$ Hence $h^u_\mu(f)+\mathcal{G}_*(\mu)\geq P^u(f, \mathcal{G})-\rho$.

Combining with $\mathit{Step\,1}$, we have proved $$P^u(f, \mathcal{G})=\sup\left\{h^u_\mu(f)+\mathcal{G}_*(\mu)\mid\mu\in\mathcal{M}_f(M)\right\}.$$

$\mathit{Step\,3}.$ Now we prove the second conclusion of Theorem \ref{variational principle}.

Let $\rho>0$ be sufficiently small, by $\mathit{step\,2}$, there exists an invariant measure $\mu$ such that $$h^u_\mu(f)+\mathcal{G}_*(\mu)>P^u(f, \mathcal{G})-\rho.$$ Note that $
h^u_\mu(f)+\mathcal{G}_*(\mu)=\int_{\mathcal{M}^e_f(M)}\left(h^u_\nu(f)+\mathcal{G}_*(\nu)\right)d\nu,$ there is an ergodic invariant measure $\nu$ such that $$h^u_\nu(f)+\mathcal{G}_*(\nu)
>P^u(f, \mathcal{G})-\rho.$$ Let $\rho\to0$, one has $P^u(f, \mathcal{G})=\sup\{h^u_\mu(f)+\mathcal{G}_*(\mu)\mid\mu\in\mathcal{M}^e_f(M)\}.$
\end{proof}

Inspired by \cite{Cao}, we give the following equivalent description of Theorem \ref{variational principle}.

\begin{prop}\label{limits}
Let $f : M \rightarrow M$ be a $C^{1}$-smooth partially hyperbolic diffeomorphism and $\mathcal{G} = \{\log g_{n}\}
_{n=1}^{\infty}$ be a sequence of sub-additive potentials of $f$ on $M$. Then
$$P^u(f, \mathcal{G})=\lim\limits_{n\rightarrow\infty}P^u(f, \frac{\log g_{n}}{n})$$
if and only if
$$
P^{u}(f,\mathcal{G})=\sup\{h^{u}_{\mu}(f)+\mathcal{G}_{*}(\mu)\mid\mu\in \mathcal{M}_{f}(M)\}.
$$
\end{prop}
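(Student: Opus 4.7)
The plan is to establish the unconditional identity
$$
\lim_{n\to\infty} P^u\!\bigl(f, \tfrac{1}{n}\log g_n\bigr) \;=\; \sup\bigl\{h^u_\mu(f) + \mathcal{G}_*(\mu) : \mu \in \mathcal{M}_f(M)\bigr\},
$$
in particular showing that the limit on the left exists. Once this is proved, the equivalence in the proposition is immediate, because each side of the ``iff'' merely asserts that $P^u(f,\mathcal{G})$ equals this common real number. The engine of the proof is the variational principle for the \emph{additive} unstable topological pressure established by Hu--Wu--Zhu in \cite{Zhu2}, which applies to $\tfrac{1}{n}\log g_n$ (a genuine continuous potential for each fixed $n$) and gives
$$
P^u\!\bigl(f, \tfrac{1}{n}\log g_n\bigr)=\sup_{\mu\in\mathcal{M}_f(M)}\Bigl\{h^u_\mu(f)+\tfrac{1}{n}\!\int\!\log g_n\, d\mu\Bigr\}.
$$

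For the easy direction ``$\geq$'', I would observe that for any $\mu\in\mathcal{M}_f(M)$ the sequence $a_n:=\int\log g_n\, d\mu$ is sub-additive (by (\ref{sub-additive}) combined with $f$-invariance of $\mu$), so Fekete's lemma gives $\mathcal{G}_*(\mu)=\inf_n a_n/n\leq \tfrac{1}{n}\!\int\!\log g_n\, d\mu$ for every $n$. Plugging this into the additive variational principle yields $h^u_\mu(f)+\mathcal{G}_*(\mu)\leq P^u(f,\tfrac{1}{n}\log g_n)$ for all $\mu$ and $n$; taking the supremum over $\mu$ and then the liminf in $n$ gives the desired inequality.

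For the reverse direction, I would for each $n$ pick a near-maximizer $\mu_n\in\mathcal{M}_f(M)$ satisfying $h^u_{\mu_n}(f)+\tfrac{1}{n}\!\int\!\log g_n\, d\mu_n \geq P^u(f,\tfrac{1}{n}\log g_n)-\tfrac{1}{n}$, and extract by weak$^{*}$-compactness a subsequence $\mu_{n_k}\to\mu\in\mathcal{M}_f(M)$ along which this lower bound converges to $\limsup_n P^u(f,\tfrac{1}{n}\log g_n)$. Upper semicontinuity of $h^u_\cdot(f)$ (as used in Step~2 of the proof of Theorem~\ref{variational principle}) handles the entropy summand. The main obstacle is to prove
$$
\limsup_{k\to\infty}\tfrac{1}{n_k}\!\int\!\log g_{n_k}\, d\mu_{n_k}\;\leq\;\mathcal{G}_*(\mu),
$$
even though $\mathcal{G}_*$ is not weak$^{*}$-continuous on $\mathcal{M}_f(M)$. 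The plan here is a Euclidean-division estimate: for fixed $m$, write $n=qm+r$ with $0\leq r<m$, iterate (\ref{sub-additive}) to obtain $\log g_n\leq\sum_{i=0}^{q-1}\log g_m\circ f^{im}+\log g_r\circ f^{qm}$, integrate against the invariant $\mu_n$, divide by $n$, and let $n\to\infty$. Continuity of the \emph{fixed} function $\log g_m$ together with the weak$^{*}$-convergence $\mu_{n_k}\to\mu$ and boundedness of $\log g_r$ for $r<m$ yields $\limsup_k \tfrac{1}{n_k}\int\log g_{n_k}\, d\mu_{n_k}\leq \tfrac{1}{m}\int\log g_m\, d\mu$, and then taking $m\to\infty$ produces $\mathcal{G}_*(\mu)$. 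Combining the two semicontinuities closes the argument.
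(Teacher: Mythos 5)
Your proposal is correct, and in fact it is a cleaner and more conceptually transparent route than the paper's. You prove the \emph{unconditional} identity
\begin{equation*}
\lim_{n\to\infty} P^u\Bigl(f, \tfrac{1}{n}\log g_n\Bigr) = \sup\bigl\{h^u_\mu(f) + \mathcal{G}_*(\mu) : \mu \in \mathcal{M}_f(M)\bigr\},
\end{equation*}
which shows both sides of the ``iff'' are separately equivalent to ``$P^u(f,\mathcal{G})$ equals this common number,'' making the equivalence trivial. The paper instead proves the two implications separately. For the ``if'' direction the paper works along the subsequence $2^t$ of exponents (to exploit monotonicity of $\int\frac{\log g_{2^t}}{2^t}\,d\nu$ along powers of $2$), applies upper semi-continuity of $h^u$ to conclude $\lim_n P^u(f,\tfrac{1}{n}\log g_n)\le P^u(f,\mathcal{G})$, and then bounds $P^u(f,\mathcal{G})\le P^u(f,\tfrac{1}{l}\log g_l)$ by a Euclidean-division estimate applied directly at the level of the counting quantities $P^u_n$; for the ``only if'' direction the paper cites a swap-of-limit-and-sup argument from a prior reference rather than supplying it. Your $\le$ half runs the Euclidean-division bound at the level of \emph{measures} (integrate $\log g_n\le\sum_{i<q}\log g_m\circ f^{im}+\log g_r\circ f^{qm}$ against the invariant $\mu_n$ and pass to the weak$^*$ limit), which is essentially the argument the paper outsources in its ``only if'' step but makes it self-contained; and your $\ge$ half (Fekete plus the additive variational principle) is a clean substitute for the paper's $2^t$-subsequence argument. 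The two ingredients you must borrow --- the additive unstable variational principle from \cite{Zhu2} and upper semi-continuity of $\mu\mapsto h^u_\mu(f)$ --- are the same ones the paper uses, so nothing is smuggled in. One minor point worth flagging explicitly in a write-up: you should first pass to a subsequence along which $P^u(f,\tfrac{1}{n}\log g_n)$ attains its $\limsup$ and only then extract the weak$^*$-convergent sub-subsequence of near-maximizers; you gesture at this but it is where a careless reader could lose the $\limsup$.
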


\begin{proof}
$\textbf{{The ''if'' part}}.$
By assumption, for any $\mu\in\mathcal{M}_{f}(M)$ we have \begin{equation}\label{sequece-limit(2)}
h^{u}_{\mu}(f)+\lim\limits_{n\rightarrow\infty}\int\frac{\log g_{n}}{n}d\mu\leq P^u(f, \mathcal{G}).
\end{equation}
 Based on upper semi-continuity of unstable metric entropy, for any $t\in\mathbb{Z}^{+}$, there exists $\mu_{2^{t}}\in\mathcal{M}_{f}(M)$ such that
$$P^u(f, \frac{\log g_{2^{t}}}{2^{t}})=h^{u}_{\mu_{2^{t}}}(f)+\int\frac{\log g_{2^{t}}}{2^{t}}d\mu_{2^{t}}.$$
Since the set $\mathcal{M}_{f}(M)$ is compact, we can assume that $\{\mu_{2^{t}}\}_{t\in\mathbb{Z}^{+}}\rightarrow\mu$. By the sub-additivity of $\{\log g_{n}\}_{n\in\mathbb{Z}^{+}}$, the sequence $\{\dfrac{\log g_{n}}{n}\}$ is decreasing. Then
$$h^{u}_{\mu_{2^{t}}}(f)+\int\frac{\log g_{2^{t}}}{2^{t}}d\mu_{2^{t}}\leq h^{u}_{\mu_{2^{t}}}(f)+\int\frac{\log g_{2^{s}}}{2^{s}}d\mu_{2^{t}},$$ if $s<t$.
 Based on upper semi-continuity of unstable metric entropy, one has $$\begin{aligned}
&\lim\limits_{n\rightarrow\infty}P^u(f, \frac{\log g_{n}}{n})=\lim\limits_{t\rightarrow\infty}P^u(f, \frac{\log g_{2^{t}}}{2^{t}})\\
=&\lim\limits_{t\rightarrow\infty}\left(h^{u}_{\mu_{2^{t}}}(f)+\int\frac{\log g_{2^{t}}}{2^{t}}d\mu_{2^{t}}\right)\\
\leq &\lim\limits_{t\rightarrow\infty}\left(h^{u}_{\mu_{2^{t}}}(f)+\int\frac{\log g_{2^{s}}}{2^{s}}d\mu_{2^{t}}\right)\;(s<t)\\
\leq&h^{u}_{\mu}(f)+\int\frac{\log g_{2^{s}}}{2^{s}}d\mu.
\end{aligned}$$
By the arbitrariness of the natural number $s$, one has
\begin{equation}\label{sequece-limit(1)}
\lim\limits_{n\rightarrow\infty}P^u(f, \frac{\log g_{n}}{n})\leq h^{u}_{\mu}(f)+\lim\limits_{s\rightarrow\infty}\int\frac{\log g_{2^{s}}}{2^{s}}d\mu.
\end{equation}
Combining (\ref{sequece-limit(2)}) with (\ref{sequece-limit(1)}), we get
$$\lim\limits_{n\rightarrow\infty}P^u(f, \frac{\log g_{n}}{n})\leq P^u(f, \mathcal{G}). $$

For the other inequality, we write $n=sl+r$, for any given $l$, where $s\geq0,\,0\leq r< l$, by the sub-additivity of $\{\log g_{n}\}$, for any $0\leq j< l$, we have
$$\log g_{n}(x)\leq \log g_{j}(x)+\log g_{l}(f^{j}x)+\cdots+\log g_{l}(f^{(s-2)l+j}x)+\log g_{l+r-j}(f^{(s-1)l+j}x),$$
where $\log g_{0}(x)\equiv 0$. Summing up the inequalities above from $j=0$ to $j=l-1$ leads to
$$l\log g_{n}(x)\leq 2lC_{1}+\sum^{(s-1)l-1}_{i=0}\log g_{l}(f^{i}x),$$
where $C_{1}=\max\limits_{0\leq j\leq 2l-1}\max\limits_{x\in M}|\log g_{j}(x)|$.
Hence
\begin{equation}\label{(2.2)}
\log g_{n}(x)\leq 2C_{1}+\sum^{(s-1)l-1}_{i=0}\dfrac{1}{l}\log g_{l}(f^{i}x)\leq 4C_{1}+\sum^{n-1}_{i=0}\dfrac{1}{l}\log g_{l}(f^{i}x),
\end{equation}
and so
$$g_{n}(x)\leq  \exp(4C_{1})\cdot\exp(\sum^{n-1}_{i=0}\dfrac{1}{l}\log g_{l}(f^{i}x)).$$
Then for any $x\in M$ and any $\delta>0$, one has
$$P_{n}^{u}(f, \mathcal{G}, \epsilon, x, \delta)\leq P_{n}^{u}(f, \dfrac{1}{l}\log g_{l}, \epsilon, x, \delta),$$
$$P^{u}(f, \mathcal{G})\leq P^{u}(f, \dfrac{1}{l}\log g_{l}),$$
and then
$$P^{u}(f, \mathcal{G})\leq\lim\limits_{l\rightarrow\infty}P^{u}(f, \dfrac{1}{l}\log g_{l}).$$

$\textbf{The ''only if " part}.$
Since the entropy map $\mu\rightarrow h^{u}_{\mu}(f)$ is upper semi-continuous, with a similar argument as Proposition $4.4$ in \cite{Huang1}, we have
$$\lim\limits_{n\rightarrow\infty}\sup\{h^{u}_{\mu}(f)+\int\frac{\log g_{n}}{n}d\mu\mid\mu\in \mathcal{M}_{f}(M)\}=\sup\{h^{u}_{\mu}(f)+\mathcal{G}_{*}(\mu)\mid\mu\in \mathcal{M}_{f}(M)\}.$$
Then by the variational principle for the additive potentials, we have $$P^u(f, \frac{\log g_{n}}{n})=\sup\{h^{u}_{\mu}(f)+\int\frac{\log g_{n}}{n}d\mu\mid\mu\in \mathcal{M}_{f}(M)\},$$
and so $$
\begin{aligned}
P^u(f, \mathcal{G})&=\lim\limits_{n\rightarrow\infty}\sup\{h^{u}_{\mu}(f)+\int\frac{\log g_{n}}{n}d\mu:\;\mu\in \mathcal{M}_{f}(M)\}\\
&=\sup\{h^{u}_{\mu}(f)+\mathcal{G}_{*}(\mu)\mid\mu\in \mathcal{M}_{f}(M)\}.
\end{aligned}$$
\end{proof}
\begin{rem}
This proposition shows that to deduce the variational principle for sub-additive potentials from additive case is equivalent to prove it directly. 
\end{rem}

\proof[Acknowledgements] The first author is supported by a NSFC (National Science Foundation of China) grant with grant No.\,11501066 and a grant from the Department of Education in Chongqing City with contract No.\,KJQN201900724 in Chongqing Jiaotong University.

The second author is supported by the Chongqing Key Laboratory of Analytic Mathematics and Applications.

The third author is supported by the National Science Foundation
of China with grant No.\,11871120 and 11671093.


\begin{thebibliography}{99}

\bibitem{Bowen} R. Bowen, \emph{Equilibrium states and the ergodic theory of Anosov diffeomorphisms}. Lect. Notes
in Math. 470, Springer Verlag, 1975.

\bibitem{Bowen1} R. Bowen, \emph{Topological entropy for noncompact sets}. Trans. Amer. Math. Soc. 184 (1973),
125-136.

\bibitem{L. Barreira1} L. Barreira, \emph{Measures of maximal dimension for hyperbolic diffeomorphisms}. Commum. Math. Phys. 239
(2003), 93-113.

\bibitem{Huang1} Y. Cao, D. Feng, and W. Huang, \emph{The thermodynamic formalism for sub-multiplicative potentials}.
Discrete Contin. Dyn. Syst. Ser. A 20 (2008), 639-657.


\bibitem{Cao} J. Ban, Y. Cao, and H. Hu, \emph{Dimensions of average conformal repeller}. Trans. Amer. Math. Soc. 362 (2010), 727-751.

\bibitem{K. Falconer} K. Falconer, \emph{A subadditive thermodynamic formalism for mixing repellers}. J. Phys. A  21 (1988), 737-742.


\bibitem{Hu1} H. Hu, Y. Hua, and W. Wu, \emph{Unstable entropies and variational principle for partially hyperbolic diffeomorphisms}. Adv. Math. 321 (2017), 31-68.

\bibitem{Zhu2} H. Hu, W. Wu, and Y. Zhu, \emph{Unstable pressure and u-equilibrium states for partially hyperbolic diffeomorphisms}. ArXiv preprint arXiv:1601.05504, 2016.



\bibitem{Pesin1} Ya. Pesin and B. Pitskel, \emph{Topological pressure and the variational principle for noncompact sets}. Functional Anal. Appl. 18 (1984), 307-318.

\bibitem{Rohlin} V. A. Rohlin, \emph{On the fundamental ideas of measure theory}. J. Amer. Math. Soc. Translation, no. 71 (1952), 55 pp.

\bibitem{Ruelle} D. Ruelle, \emph{Statistical mechanics on a compact set with $\mathbb{Z}^{\nu}$-action satisfying expansiveness
and specification}. Trans. Amer. Math. Soc. 187 (1973), 237-251.


\bibitem{Walters2} P. Walters, \emph{An introduction to ergodic theory}. Graduate Texts in Mathematics, Vol. 79, Springer Verlag, New York-Berlin, 1982.



\end{thebibliography}
\end{document}